\theoremstyle{definition}
\newtheorem{prop}{Proposition}[section]
\newtheorem{lemma}{Lemma}[section]
\newtheorem{thm}{Theorem}[section]
\newtheorem{assumption}{Assumption}[section]
\newtheorem{example}{Example}[section]
\theoremstyle{remark}
\title{The backward Euler-Maruyama method for invariant measures of stochastic differential equations with super-linear coefficients}
\author{
 Wei Liu \\
 Department of Mathematics, \\
 Shanghai Normal University \\
 Shanghai, 200234, China\\
 \texttt{weiliu@shnu.edu.cn; lwbvb@hotmail.com} \\
 \AND
  Xuerong Mao \\
  Department of Mathematics and Statistics,\\
  University of Strathclyde\\
  Glasgow,  G1 1XH, UK\\ 
  \texttt{x.mao@strath.ac.uk}  \\
\AND
  Yue Wu \\
  Department of Mathematics and Statistics,\\
  University of Strathclyde\\
  Glasgow,  G1 1XH, UK\\ 
  \texttt{yue.wu@strath.ac.uk}  \\
}
\begin{document}
\maketitle

\begin{abstract}
The backward Euler-Maruyama (BEM) method is employed to approximate the invariant measure of stochastic differential equations, where both the drift and the diffusion coefficient are allowed to grow super-linearly. The existence and uniqueness of the invariant measure of the numerical solution generated by the BEM method are proved and the convergence of the numerical invariant measure to the underlying one is shown. Simulations are provided to illustrate the theoretical results and demonstrate the application of our results in the area of system control.
\end{abstract}

\keywords{Stochastic differential equation \and Stationary measure \and Super-linear coefficients \and Backward Euler-Maruyama method}

\section{Introduction}

Invariant measure is one of essential properties of stochastic differential equations (SDEs), when long time behaviours of SDEs are investigated, such as the persistence for biology and epidemic SDE models \cite{Allen2007,Mao2008}. However, the explicit forms of neither the true solutions nor the invariant measures to SDEs are easily found. Therefore, numerical methods become extremely important when SDE models are applied in practice.   
\par
For SDEs of the It\^o form 
\begin{align}
  \label{eq:introSDE}
  \begin{cases}
    \mathrm{d}{X_t} = 
    \mu(X_t)  \mathrm{d}{t}+\sigma(X_t)\mathrm{d}{W_t},& \quad \text{for } t >0,\\
    X_{0} = x\in \mathbb{R}^d,& 
  \end{cases}
\end{align}
Yuan and Mao in \cite{YM2004} studied the numerical invariant measure generated by the Euler-Maruyama (EM) method when both the coefficients $\mu (\cdot)$ and $\sigma (\cdot)$ obey the global Lipschitz condition. Under the same condition on the coefficients, Weng and Liu investigated the numerical approximation to invariant measures of SDEs by the Milstein method \cite{WL2019}. When some non-global Lipschitz terms appear in the coefficients, the backward Euler-Maruyama (BEM) method (also called the semi-implicit Euler method) and the truncated EM method were employed to handle the super-linearity. Liu and Mao in \cite{LM2015} discussed the BEM method for numerically approximating the invariant measure when the one-sided Lipschitz condition was imposed on the draft coefficient $\mu (\cdot)$ but the global Lipschitz condition was still required for the diffusion coefficient $\sigma (\cdot)$. Jiang, Weng and Liu further studied the stochastic $\theta$ method for this problem and discussed the effects of the different choices of $\theta$ on the requirements on the coefficients \cite{JWL2020}. When the constraints on the coefficients were further released, Li, Mao and Yin proposed the truncated EM method to approximate the invariant measure of the underlying SDEs \cite{LMY2019}.        
\par
In this paper, we revisit the BEM method and study the numerical approximation to invariant measures of SDEs with both the drift and diffusion coefficients containing super-linear terms. Compared with the existing work \cite{LM2015}, where only the drift coefficient was allowed to grow super-linearly, our work releases the condition on the diffusion coefficient such that the super-linear terms are also allowed. To achieve such a better result, a different technique is employed in this paper. Briefly speaking, instead of directly forming an iteration for the numerical solution of $X_t$, we construct an iteration for the numerical approximation of some linear combination of $X_t$ and $\sigma(X_t)$. It should be mentioned that this technique is inspired by \cite{AK2017}. Similar techniques were employed for the studies on the finite time convergence and the stability of the trivial solution of the BEM method \cite{CG2020} and the stochastic $\theta$ method \cite{WWD2020}, and for the study on the infinite time convergence of BEM method to the random periodic solution of the SDEs with additive noise \cite{W2022}. But, to our best knowledge, there is no exiting work on the numerical approximation to invariant measures of SDEs with the super-linear drift and diffusion coefficients by using the BEM method. 
\par
Therefore, the result obtained in this paper can be regarded as an extension to \cite{LM2015} and a complement to the study on BEM method in the aspect of numerical invariant measures. In addition, our results can support the application of theorems on stabilisation of SDEs in the distribution sense that were recently developed in \cite{LLLM2022,YHLM2022} (see Example \ref{example:2d} for the illustration).  
It should be mentioned that the numerical invariant measure of SDEs obtained in this paper could also assist in approximating the corresponding high-dimensional partial differential equation such as the high-dimensional Fokker–Planck equation. With the help of the neural network architecture, such an approach through a probabilistic representation to learn the solution of some partial differential equation could be quite efficient \cite{HJE2018}.
\par
Other approaches were also proposed and investigated for approximating invariant measures of SDEs. An incomplete list includes \cite{BSY2016,FG2018,MSH2002,Talay1990}, among many others. The BEM method, as the simplest version of implicit methods, was widely studied for many different types of stochastic equations \cite{GLM2019,MS2013,XSL2011,ZX2019,Zhou2015}. We just mention some of them here and refer the readers to the reference therein for more works.
\par
We end this introduction with some discussions on the competition between explicit and implicit methods. For stiff ordinary differential equations, implicit methods are preferred due to its good performance even with on a time grid with a large step size \cite{wanner1996}. But for its stochastic counterpart, explicit methods are also popular \cite{HJK2012,Mao2015}. Since many sample paths are usually needed to be simulated in practice, explicit methods have their advantages like simple algorithm structure, easy to implement and no need to solve nonlinear equation systems in each iterations, if simulations are conducted in some finite short intervals. For simulations of long time behaviours of SDEs, implicit methods that pose better stability properties allow large step-sizes and have low total computational costs. More interesting and detailed discussions on this topic can be found in, for example \cite{Higham2011,MT2010}.

\section{Mathematical Preliminaries}

Let $W \colon [0,\infty) \times \Omega \to \mathbb{R}^n$ be a standard Wiener process on the probability space $(\Omega, \mathcal{F}, \mathbb{P})$, with the filtration defined by $\mathcal{F}_s^{t}:=\sigma\{W_u-W_v:s<v\le u<t\}$ and $\mathcal{F}^{t}=\mathcal{F}^{t}_0=\vee_{0\le s\le t}\mathcal{F}^{t}_s$. Throughout this paper, we shall use $|\cdot|$ for the Euclidean norm and $\langle\cdot,\cdot\rangle$ for the inner product in the Euclidean space. For a vector $u$, we define $\|u\|:=\sqrt{\mathbb{E}[|u|^2]}$ and $\|u\|_p:=\sqrt[p]{\mathbb{E}[|u|^p]}$. For a matrix $B$, $\|B\|_{\rm HS}$ means its Hilbert-schmidt norm. In addition, we define $\|B\|:=\sqrt{\mathbb{E}[\|B\|^2_{\rm HS}]}$ and $\|B\|_{p}:=\sqrt[p]{\mathbb{E}[\|B\|^p_{\rm HS}]}$. Denote $a\vee b$ the larger one between scalars $a$ and $b$, and $a\wedge b$ the smaller one. The family of all probability measures on $\mathbb{R}^d$ is denoted by $\mathcal{P} (\mathbb{R}^d)$. Let $\mathcal{B}(\mathbb{R}^d)$ denote the family of all Borel sets in $\mathbb{R}^d$.
\par
The $r$-Wasserstein distance between $\mu_1, \mu_2 \in \mathcal{P} (\mathbb{R}^d)$  for any $r \in (0,1]$ is defined by 
\begin{equation*}
    \mathbb{W}_r(\mu_1,\mu_2) = \inf_{\nu \in \mathcal{C} (\mu_1, \mu_2)} \int_{\mathbb{R}^d \times \mathbb{R}^d}|Y_1-Y_2|^r \nu(\mathrm{d} Y_1, \mathrm{d} Y_2),
\end{equation*}
where $\mathcal{C} (\mu_1, \mu_2)$ denotes the set of all couplings of $\mu_1$ and $\mu_2$.
\par
Given a stochastic process $X_t$ on $(\Omega, \mathcal{F}, \mathbb{P})$, for any $t>0$ and any $B \in \mathcal{B} (\mathbb{R}^d)$ let $\mathbb{P}_t (x,B)$ be the transition probability kernel of $X_t$. A probability measure $\pi(\cdot) \in \mathcal{P} (\mathbb{R}^d)$ is called an \emph{invariant measure} of $X_t$, if
\begin{equation*}
    \pi(B) = \int_{\mathbb{R}^d} \mathbb{P}_t (x,B) \pi(\mathrm{d} x)
\end{equation*}
holds for any $t > 0$ and any $B \in \mathcal{B} (\mathbb{R}^d)$.

In this paper, we are interested in the stationary measure of the solution to the $\mathbb{R}^d$-valued SDE of the form
\begin{align}
  \label{eq:SPDE}
  \begin{cases}
    \mathrm{d}{X_t} = 
    \big[-A X_t + f(X_t) \big] \mathrm{d}{t}+g(X_t)\mathrm{d}{W_t},& \quad \text{for } t >0,\\
    X_{0} = x\in \mathbb{R}^d.& 
  \end{cases}
\end{align}
We separate the drift coefficient into two parts with the emphasis on the negative linear term $-AX_t$, as it could be regarded as stabiliser term \cite{yevik2011}.
We impose several assumptions on $A$, $f$, and $g$ as follows.
\begin{assumption}
  \label{as:A}
  The linear operator $A \colon \mathbb{R}^d \to \mathbb{R}^d$ is
  self-adjoint and positive definite.
\end{assumption}

Assumption \ref{as:A} implies the existence of a positive, increasing
sequence $(\lambda_i)_{i\in \mathbb{N}} \subset \mathbb{R}$ such that 
$0<\lambda_1 \le \lambda_2 \le \ldots \lambda_d$, and of an orthonormal basis $(e_i)_{i \in [d]}$ of $ \mathbb{R}^d$ such that $A e_i = \lambda_i  e_i$ for every $i \in
[d]$, where $[d]:=\{1,\ldots,d\}$.

\begin{assumption}
  \label{as:f_Khasminskii-type}
  There exists a constant $q\in [1,\infty)$ and a positive $L$ such that 
  $$|f(u_1)-f(u_2)|\vee \|g(u_1)-g(u_2)\|_{\rm HS}\leq L(1+|u_1|^{q-1}+|u_2|^{q-1})|u_1-u_2|,$$
  for $u_1,u_2\in \mathbb{R}^d$.
\end{assumption}
It is straightforward to derive from Assumption \ref{as:f_Khasminskii-type} that
$$|f(u)|\vee \|g(u)\|_{\rm HS}\leq (2L \vee |f(0)| \vee \|g(u)\|_{\rm HS} )(1+|u|^{q})$$
for $u \in \mathbb{R}^d$.
\par

\begin{assumption}
  \label{as:fg}
  The mappings $f \colon \mathbb{R}^d \to \mathbb{R}^d$  and $g \colon \mathbb{R}^d \to \mathbb{R}^{d\times n}$ are continuous.
  Moreover, there exist $c, c_1,c_2 \in
  (0,\infty)$ and $l_1\geq 2,l_2 \geq 4q-3$ such that 
  \begin{align*}
   & 2\langle u_1-u_2, f(u_1) - f(u_2) \rangle+l_1\|g(u_1)-g(u_2)\|^2_{\rm HS} \leq c |u_1 - u_2|^2\\
    &  2\langle u, f(u) \rangle +l_2\|g(u)\|^2_{\rm HS}\le c_1 +c_2 |u|^2
  \end{align*}
  for all $u,u_1, u_2 \in \mathbb{R}^d$.
\end{assumption}
It is well known that under these assumptions the \emph{ solution} $X_{t} \colon [0,\infty) \times
\Omega \to \mathbb{R}^d$ to \eqref{eq:SPDE} is uniquely determined \cite{Mao2008}. 
With an additional assumption imposed as below,
\par
\begin{assumption}
  \label{as:f_constant}
  $c\vee c_2 <\lambda_1,$
\end{assumption}
we can show in Section \ref{sec:sol} (as in Proposition \ref{prop:use_bound2} and Proposition \ref{prop:use_bound}) the solution to SDE \eqref{eq:SPDE} is uniformly bounded in $L^p$ sense, i.e.,
\begin{align*}
    \sup_{t\geq 0}\|X_t \|_p^p<\infty,
\end{align*}
and is H\"older-continuous in the temporal variable.

Now, we give a brief revisit to the well-known BEM method.

Let us fix an equidistant partition $\mathcal{T}^h:=\{jh,\ j\in \mathbb{N} \}$ with stepsize $h\in (0,1)$. Note that $\mathcal{T}^h$ stretch along the positive real line because we are dealing with an infinite time horizon problem. Then to simulate the solution to \eqref{eq:SPDE} starting at $0$, the \emph{backward Euler-Maruyama method} on $\mathcal{T}^h$ is given by the recursion 
\begin{align}
  \label{eq:RandM}
  \begin{split}
    \hat{X}_{(j+1)h}=& \hat{X}_{jh} - Ah\hat{X}_{(j+1)h}+h f\big( \hat{X}_{(j+1)h} \big)+ 
    g(\hat{X}_{jh})\Delta W_{jh}
  \end{split}
\end{align}
for all $j \in \mathbb{N}$, where the initial value $\hat{X}_{0}  =x$, and $\Delta W_{jh}:=W_{(j+1)h}-W_{jh}$.

The implementation of \eqref{eq:RandM} requires solving a nonlinear equation at each iteration. The well-posedness of the difference equation \eqref{eq:RandM} is proved in the next lemma.

\begin{lemma}
Let Assumptions \ref{as:fg} and \ref{as:f_constant} hold. Then the BEM method is well defined.
\end{lemma}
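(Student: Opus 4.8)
The plan is to show that, at each step of the recursion, the implicit equation defining $\hat{X}_{(j+1)h}$ admits a unique solution, so that the scheme is well defined by induction on $j$ starting from $\hat{X}_0 = x$. Suppose $\hat{X}_{jh}$ is known and fix the realisation of the increment $\Delta W_{jh}$; set $b := \hat{X}_{jh} + g(\hat{X}_{jh})\Delta W_{jh}$, a known $\mathbb{R}^d$-valued random variable. Then \eqref{eq:RandM} is equivalent to $F(Y) = b$, where the unknown is $Y := \hat{X}_{(j+1)h}$ and
\begin{equation*}
  F(Y) := (I + hA)Y - h f(Y).
\end{equation*}
It therefore suffices to prove that $F\colon \mathbb{R}^d \to \mathbb{R}^d$ is a bijection for every $h \in (0,1)$; measurability of the resulting solution in $\omega$ will then follow since $b$ is measurable and $F^{-1}$ is continuous.

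The key step is to verify that $F$ is continuous and strongly monotone. Continuity is immediate from the continuity of $f$ assumed in Assumption \ref{as:fg}. For strong monotonicity, take $Y_1, Y_2 \in \mathbb{R}^d$ and expand
\begin{equation*}
  \langle F(Y_1) - F(Y_2), Y_1 - Y_2\rangle = \langle (I+hA)(Y_1-Y_2), Y_1-Y_2\rangle - h\langle f(Y_1)-f(Y_2), Y_1-Y_2\rangle.
\end{equation*}
By Assumption \ref{as:A} the operator $A$ has smallest eigenvalue $\lambda_1 > 0$, so the first term is at least $(1+h\lambda_1)|Y_1-Y_2|^2$. For the second term, dropping the nonnegative quantity $l_1\|g(Y_1)-g(Y_2)\|^2_{\rm HS}$ from the first inequality in Assumption \ref{as:fg} yields $2\langle f(Y_1)-f(Y_2), Y_1-Y_2\rangle \le c|Y_1-Y_2|^2$. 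Combining the two bounds gives
\begin{equation*}
  \langle F(Y_1)-F(Y_2), Y_1-Y_2\rangle \ge \big(1 + h\lambda_1 - \tfrac{hc}{2}\big)|Y_1-Y_2|^2 .
\end{equation*}
Assumption \ref{as:f_constant} ($c < \lambda_1$) forces $\lambda_1 - c/2 > 0$, so this coefficient is bounded below by $1$ uniformly in $h \in (0,1)$, and $F$ is strongly monotone with constant $1$.

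With strong monotonicity in hand, injectivity is immediate. For surjectivity, strong monotonicity applied to $Y$ and $0$ yields the coercivity estimate $\langle F(Y), Y\rangle \ge |Y|^2 - |F(0)|\,|Y|$, whence $\langle F(Y), Y\rangle/|Y| \to \infty$ as $|Y| \to \infty$; a continuous, coercive, monotone map on the finite-dimensional space $\mathbb{R}^d$ is surjective (the finite-dimensional Browder--Minty statement, provable directly via a topological degree argument). Hence $F$ is a bijection and \eqref{eq:RandM} is uniquely solvable at every step. The main obstacle is precisely this solvability of a nonlinear equation at each iteration: because $f$ is allowed to grow super-linearly, a contraction-mapping argument on $Y \mapsto (I+hA)^{-1}\big(b + h f(Y)\big)$ is unavailable, and one must exploit monotonicity rather than Lipschitz continuity. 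Assumption \ref{as:f_constant} is exactly what keeps the monotonicity constant positive across the whole range $h \in (0,1)$.
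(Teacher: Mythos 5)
Your proof takes essentially the same route as the paper's: rewrite the implicit step \eqref{eq:RandM} as $G(u)=b$ with $G(u)=(I+hA)u-hf(u)$, use the first inequality of Assumption \ref{as:fg} together with Assumptions \ref{as:A} and \ref{as:f_constant} to get strong monotonicity with constant $1+h\lambda_1-hc/2>0$, and conclude unique solvability at each step. If anything, you are more careful than the paper, which passes directly from ``$G$ is monotonic'' to the existence of $G^{-1}$, whereas you supply the missing justification for surjectivity (coercivity plus the finite-dimensional Browder--Minty argument) and note the measurability of the resulting iterate.
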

\begin{proof}
For any $N \in \mathbb{N}$, rewrite the BEM method \eqref{eq:RandM} into
\begin{equation*}
    \hat{X}_{(N+1)h} + Ah\hat{X}_{(N+1)h} - h f\big( \hat{X}_{(N+1)h} \big) = \hat{X}_{Nh} + 
    g(\hat{X}_{Nh})\Delta W_{Nh}.
\end{equation*}
Define $G(u) = u + Ahu -hf(u) $ for $u \in \mathbb{R}^d$. By Assumption \ref{as:fg}, we have 
\begin{equation*}
    2\langle u_1-u_2, f(u_1) - f(u_2) \rangle \leq c |u_1 - u_2|^2
\end{equation*}
for all $u_1, u_2 \in \mathbb{R}^d$. Then, it is straightforward to see
\begin{equation*}
    \langle u_1-u_2, G(u_1) - G(u_2) \rangle \geq \big(1+\lambda_1h - \frac{ch}{2}\big) |u_1 - u_2|^2.
\end{equation*}
Due to Assumption \ref{as:f_constant}, $1+\lambda_1h - ch/2 > 0$ holds for all $h>0$, which means that $G(\cdot)$ is monotonic. So $G(\cdot)$ has its inverse function $G^{-1} (\cdot): \mathbb{R}^d \rightarrow \mathbb{R}^d$ such that for any $N \in \mathbb{N}$
\begin{equation*}
    \hat{X}_{(N+1)h} = G^{-1} \big( \hat{X}_{Nh} + 
    g(\hat{X}_{Nh})\Delta W_{Nh} \big).
\end{equation*}
That is to say, for any $N \in \mathbb{N}$ the unique $\hat{X}_{(N+1)h}$ can always be found for the given $\hat{X}_{Nh} + g(\hat{X}_{Nh})\Delta W_{Nh}$, which completes the proof.
\end{proof}

To explore the invariant measure of the numerical solution, we introduce some more notations. For any $j \in \mathbb{N}$ and any $B \in \mathcal{B} (\mathbb{R}^d)$, let $\hat{\mathbb{P}}_{jh} (x,B)$ be the transition probability kernel of $\hat{X}_{jh}$. A probability measure $\hat{\pi}(\cdot) \in \mathcal{P} (\mathbb{R}^d)$ is called an \emph{invariant measure} of $\hat{X}_{jh}$, if
\begin{equation*}
    \hat{\pi}(B) = \int_{\mathbb{R}^d} \hat{\mathbb{P}}_{jh} (x,B) \hat{\pi}(\mathrm{d} x)
\end{equation*}
holds for any integer $j \in \mathbb{N}$ and any $B \in \mathcal{B} (\mathbb{R}^d)$.

We end up this section by pointing out {\bf the crucial equality} for analysis of the backward Euler-Maruyama in our paper. For any $a,b \in \mathbb{R}^d$, the equality 
\begin{equation}\label{eqn:eqa}
    |b|^2-|a|^2+|b-a|^2=2\langle b-a, b\rangle
\end{equation}
holds.

\section{Some properties of the underlying solution}\label{sec:sol}
In this section, we mainly explore properties of the solution to \eqref{eq:SPDE} for analysis later. 
  
The first property we will show is the uniform boundedness for the $p$-th moment of the SDE solution.
\begin{prop}\label{prop:use_bound2} Suppose that Assumptions \ref{as:A} to \ref{as:f_constant} hold. Then, for any $p \in [2, l_2+1]$ the solution to \eqref{eq:SPDE} satisfies
\begin{equation} 
   \sup_{t\geq 0}\|X_{t}\|^p_p<\infty.
\end{equation}
\end{prop}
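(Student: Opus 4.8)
The plan is to apply It\^o's formula to $V(x)=|x|^p$, which is $C^2$ on $\mathbb{R}^d$ for every $p\ge 2$, and to convert the resulting identity into a dissipative differential inequality for $m(t):=\mathbb{E}[|X_t|^p]$. To make the expectation of the stochastic integral vanish and to justify differentiating under the expectation, I would first localise with the stopping times $\tau_R=\inf\{t\ge0:|X_t|\ge R\}$, establish the inequality for the stopped process (for which the moments are finite and the stopped integral is a genuine martingale), and then let $R\to\infty$ by Fatou's lemma. Writing $\mathcal{L}V$ for the generator of \eqref{eq:SPDE} applied to $V$, It\^o's formula gives
\begin{equation*}
\mathcal{L}V(X_t)=p|X_t|^{p-2}\langle X_t,-AX_t+f(X_t)\rangle+\tfrac{p}{2}|X_t|^{p-2}\|g(X_t)\|_{\rm HS}^2+\tfrac{p(p-2)}{2}|X_t|^{p-4}|g(X_t)^\top X_t|^2,
\end{equation*}
and the goal is to bound $\mathbb{E}[\mathcal{L}V(X_t)]$ from above by $-\beta m(t)+\gamma$ for some $\beta>0$.

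The core of the argument is algebraic. By Assumption~\ref{as:A} I use $\langle X_t,AX_t\rangle\ge\lambda_1|X_t|^2$ to extract the stabilising term $-p\lambda_1|X_t|^p$. For the It\^o correction I apply the elementary bound $|g(X_t)^\top X_t|^2\le|X_t|^2\|g(X_t)\|_{\rm HS}^2$ (which, since $p(p-2)\ge0$ for $p\ge2$, points in the right direction), so that the two diffusion contributions merge into $\tfrac{p(p-1)}{2}|X_t|^{p-2}\|g(X_t)\|_{\rm HS}^2$. This yields
\begin{equation*}
\mathcal{L}V(X_t)\le -p\lambda_1|X_t|^p+\tfrac{p}{2}|X_t|^{p-2}\Big(2\langle X_t,f(X_t)\rangle+(p-1)\|g(X_t)\|_{\rm HS}^2\Big).
\end{equation*}
Here is precisely where the hypothesis $p\le l_2+1$ enters: since $p-1\le l_2$ and $\|g\|_{\rm HS}^2\ge0$, the bracket is dominated by $2\langle X_t,f(X_t)\rangle+l_2\|g(X_t)\|_{\rm HS}^2$, to which the second inequality of Assumption~\ref{as:fg} applies and returns the bound $c_1+c_2|X_t|^2$.

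Substituting, I obtain $\mathcal{L}V(X_t)\le -p\lambda_1|X_t|^p+\tfrac{p}{2}c_1|X_t|^{p-2}+\tfrac{p}{2}c_2|X_t|^p$. The lower-order term is absorbed by Young's inequality, $\tfrac{p}{2}c_1|X_t|^{p-2}\le\varepsilon|X_t|^p+C_\varepsilon$ with $\varepsilon$ small (the step is vacuous when $p=2$, where $|X_t|^{p-2}=1$). Collecting terms gives $\mathbb{E}[\mathcal{L}V(X_t)]\le -\big(p\lambda_1-\tfrac{p}{2}c_2-\varepsilon\big)m(t)+C_\varepsilon$, and Assumption~\ref{as:f_constant} ($c_2<\lambda_1$) ensures that after fixing $\varepsilon$ small enough the coefficient $\beta:=p\lambda_1-\tfrac{p}{2}c_2-\varepsilon$ is strictly positive. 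Thus $m'(t)\le-\beta m(t)+C_\varepsilon$, and the comparison principle gives $m(t)\le e^{-\beta t}|x|^p+C_\varepsilon/\beta$, which is bounded uniformly in $t\ge0$; taking the supremum finishes the proof.

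The conceptual heart is the exact matching of the threshold $p\le l_2+1$ with the $l_2$-dissipativity, which is clean once the It\^o corrections are correctly combined. The only genuinely delicate point I anticipate is the localisation bookkeeping: one must confirm the a priori finiteness of moments up to $\tau_R$ so that the stopped stochastic integral is a true martingale before passing to the limit $R\to\infty$; everything else is routine estimation.
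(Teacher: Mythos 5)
Your proposal is correct and follows essentially the same route as the paper: both apply It\^o's formula to $|X_t|^p$, combine the two diffusion correction terms into $\tfrac{p(p-1)}{2}|X_t|^{p-2}\|g(X_t)\|_{\rm HS}^2$, invoke $p-1\le l_2$ to reduce the bracket to the dissipativity bound $c_1+c_2|X_t|^2$ of Assumption \ref{as:fg}, and use $c_2<\lambda_1$ to obtain a strictly negative leading coefficient. The only differences are cosmetic --- the paper works with the integrating factor $e^{\epsilon t}|X_t|^p$ and bounds the resulting polynomial by a constant, whereas you derive a differential inequality and apply a comparison argument (absorbing the lower-order term via Young), and you spell out the localisation/Fatou step that the paper leaves implicit.
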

\begin{proof}
Due to Assumption \ref{as:f_constant}, we have $c_2 < 2 \lambda_1$. Then, let $\epsilon$ be a sufficiently small positive number such that $p\lambda_1 - 0.5p c_2 -\epsilon >0$.
By the It\^o formula,
\begin{equation*}
\mathbb{E}[ e^{\epsilon t } |X_t|^p ]  \leq |x|^p
+ \mathbb{E} \int_0^t e^{\epsilon s} \left[ -(p\lambda_1-\epsilon) |X_s|^p + 0.5p |X_s|^{p-2} \left( 2 \langle X_x,f(X_s)\rangle
+(p-1)\|g(X_s)\|^2_{\rm HS} \right) \right]\mathrm{d} s.
\end{equation*}
As $p - 1 \leq l_2$,  Assumption \ref{as:fg} indicates
\begin{align*}
\mathbb{E}[ e^{\epsilon t } |X_t|^p ] & \le |x|^p
+ \mathbb{E} \int_0^t e^{\epsilon s} \Big[ -(p\lambda_1-\epsilon) |X_s|^p
+ 0.5p |X_s|^{p-2} (c_1+c_2 |X_s|^2 ) \Big]\mathrm{d}s \\
& = |x|^p
+ \mathbb{E} \int_0^t e^{\epsilon s} \Big[ -(p\lambda_1 - 0.5p c_2 -\epsilon) |X_s|^p
+ 0.5pc_1 |X_s|^{p-2} \Big]\mathrm{d} s
\end{align*}
Since $ p\lambda_1 - 0.5p c_2 -\epsilon >0$, we know that the polynomial $ -(p\lambda_1 - 0.5p c_2 -\epsilon) |X_s|^p + 0.5pc_1 |X_s|^{p-2} $ is always bounded by a positive number almost surely for any $|X_s| \in \mathbb{R}$. Denote the upper bound by $K$. Hence
\begin{equation*}
\mathbb{E}[ e^{\epsilon t } |X_t|^p ] \le |x|^p + \int_0^t e^{\epsilon s} K \mathrm{d}s
\le |x|^p + (K/\epsilon)e^{\epsilon t},
\end{equation*}
which implies
\begin{equation*}
\mathbb{E}[ |X_t|^p ] \le |x|^p + (K/\epsilon), \quad \forall t\ge 0.
\end{equation*}
Therefore, the proof is completed.
\end{proof}
Following a similar argument as in Proposition 5.4 and 5.5 \cite{kruse2016}, we can easily get the following bounds.
\begin{prop}\label{prop:use_bound} Suppose that Assumptions \ref{as:A} to \ref{as:f_constant} hold, then there exists a positive constant $C_{q,A,f}$ which depends on $q$, $d$, $A$,$C_f$ only, such that
\begin{align}
    \|X_{t_1}-X_{t_2}\| \leq C_{q,A,f}\big(1+\sup_{t\geq 0}\|X_{t}\|^q_{2q}\big)|t_2-t_1|^{\frac{1}{2}},
\end{align}
for all $t_1,t_2\geq 0$. Moreover, 
\begin{align}
    \begin{split}
        &\int_{t_1}^{t_2}\big\|A\big(X_{s}-X_{t_4}\big)+f(X_{s})-f(X_{t_4})\big)\big\|\mathrm{d}s\\
        &\leq C_{q,A,f}\big(1+\sup_{t\geq 0}\|X_{t}\|^{2q-1}_{4q-2}\big)|t_2-t_1|^{\frac{3}{2}},
    \end{split}
\end{align}
for all $t_3,t_4\in [t_1,t_2]$.
\end{prop}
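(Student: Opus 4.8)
The plan is to work from the integral form of \eqref{eq:SPDE}, namely $X_{t_2}-X_{t_1}=\int_{t_1}^{t_2}\big[-AX_s+f(X_s)\big]\mathrm{d}s+\int_{t_1}^{t_2}g(X_s)\mathrm{d}W_s$, and estimate the drift and diffusion contributions separately, using Proposition~\ref{prop:use_bound2} to guarantee that every moment that appears is finite. For the first assertion I would treat the drift integral with Minkowski's integral inequality, then use $|Au|\le\lambda_d|u|$ together with the growth consequence of Assumption~\ref{as:f_Khasminskii-type}, $|f(u)|\le C(1+|u|^q)$, to obtain a bound of order $C(1+\sup_t\|X_t\|_{2q}^q)\,|t_2-t_1|$. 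The diffusion integral I would control by the It\^o isometry (or the Burkholder--Davis--Gundy inequality for the $L^p$ analogue) followed by $\|g(u)\|_{\rm HS}\le C(1+|u|^q)$, which yields $C(1+\sup_t\|X_t\|_{2q}^q)\,|t_2-t_1|^{1/2}$. On the relevant regime $|t_2-t_1|\le 1$ (grid points are spaced by $h<1$) the drift power $|t_2-t_1|$ is dominated by $|t_2-t_1|^{1/2}$, giving the first bound; the moment $\sup_t\|X_t\|_{2q}^q$ is finite by Proposition~\ref{prop:use_bound2} since $2q\le l_2+1$. The identical computation produces the $L^p$ Hölder bound $\|X_{t_1}-X_{t_2}\|_p\le C(1+\sup_t\|X_t\|_{pq}^q)\,|t_2-t_1|^{1/2}$, which I will need for the second part.

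For the second assertion I would split the integrand by the triangle inequality into an $A$-part, $\|A(X_s-X_{t_4})\|\le\lambda_d\|X_s-X_{t_4}\|$, and an $f$-part $\|f(X_s)-f(X_{t_4})\|$. The $A$-part is handled immediately by the first bound, and since $2q\le 4q-2$ and $q\le 2q-1$ it is dominated by the $f$-contribution. For the $f$-part I would apply Assumption~\ref{as:f_Khasminskii-type} to write $|f(X_s)-f(X_{t_4})|\le L(1+|X_s|^{q-1}+|X_{t_4}|^{q-1})|X_s-X_{t_4}|$, then use Hölder's inequality in $\omega$ with the conjugate exponents $p_1=(2q-1)/(q-1)$ and $p_2=(2q-1)/q$ (valid for $q>1$; the case $q=1$ is globally Lipschitz and follows directly from the first bound). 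This split is engineered so that, after Hölder, the growth factor and the increment each carry the order-$(4q-2)$ moment: the growth factor to the power $2(q-1)$ because $2(q-1)p_1=4q-2$, and the increment $\|X_s-X_{t_4}\|_{2p_2}^2$ to the power $2q$ via the $L^{2p_2}$ Hölder bound with $2p_2 q=4q-2$, the latter also supplying the factor $|s-t_4|$. Their product has power $4q-2$, so after taking the square root the integrand is bounded by $C(1+\sup_t\|X_t\|_{4q-2}^{2q-1})\,|s-t_4|^{1/2}$. Integrating over $s\in[t_1,t_2]$, where $|s-t_4|\le|t_2-t_1|$, then yields the $|t_2-t_1|^{3/2}$ factor.

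The main obstacle is the moment bookkeeping in the $f$-part: a crude Cauchy--Schwarz split would leave a product of distinct moments of orders $4q-4$ and $4q$ rather than the single order-$(4q-2)$ moment claimed, so the specific conjugate pair $(p_1,p_2)$ above is essential. This is precisely where the standing hypothesis $l_2\ge 4q-3$ is used, since it is equivalent to $4q-2\le l_2+1$, which is exactly the range in which Proposition~\ref{prop:use_bound2} guarantees $\sup_t\|X_t\|_{4q-2}^{4q-2}<\infty$ and hence that all the moments invoked are finite.
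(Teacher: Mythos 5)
Your proposal is correct and is essentially the argument the paper itself relies on: the paper gives no proof here but defers to Propositions 5.4 and 5.5 of \cite{kruse2016}, which proceed exactly as you do --- integral form with polynomial growth and the It\^o isometry/Burkholder--Davis--Gundy bound for the H\"older estimate, then the polynomial Lipschitz bound of Assumption~\ref{as:f_Khasminskii-type} combined with H\"older's inequality at the conjugate pair $(2q-1)/(q-1)$, $(2q-1)/q$, so that every moment appearing is of order $4q-2\le l_2+1$ and hence finite by Proposition~\ref{prop:use_bound2}. The only loose end is the regime $|t_2-t_1|>1$ (the statement is claimed for all $t_1,t_2\ge 0$), which your argument sets aside but which is handled in one line: there $|t_2-t_1|^{1/2}\ge 1$ and $\|X_{t_1}-X_{t_2}\|\le 2\sup_{t\ge 0}\|X_t\|_{2q}\le C\big(1+\sup_{t\ge 0}\|X_t\|_{2q}^q\big)$ by the uniform moment bound.
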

The next theorem states that the underlying solution admits a unique invariant measure. With the help of Propositions \ref{prop:use_bound2} and \ref{prop:use_bound}, the following theorem can be proved by following the same approach as the proof of Theorem 2.3 in \cite{BSY2016} or Theorem 7.4 in \cite{LMY2019}. So we omit the proof here.
\begin{thm}\label{thm:trueinvmea}
Suppose that Assumptions \ref{as:A} to \ref{as:f_constant} hold. Then the solution to \eqref{eq:SPDE} converges in the Wasserstein distance to a unique invariant measure $\pi \in \mathcal{P} (\mathbb{R}^d)$ with some exponential rate $\xi_2 >0 $.
\end{thm}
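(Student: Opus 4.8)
The plan is to establish exponential contractivity of the transition semigroup in the $r$-Wasserstein distance via a synchronous coupling (two solutions driven by the same Wiener process), and then to deduce existence, uniqueness, and geometric convergence by a standard Cauchy-sequence argument in the complete metric space $(\mathcal{P}(\mathbb{R}^d), \mathbb{W}_r)$, following the structure of Theorem 2.3 in \cite{BSY2016} and Theorem 7.4 in \cite{LMY2019}. First I would fix two initial points $x,y \in \mathbb{R}^d$, let $X_t^x, X_t^y$ solve \eqref{eq:SPDE} with the same driving noise, and set $Z_t := X_t^x - X_t^y$. Applying the It\^o formula to $|Z_t|^2$, the diffusion contributes the correction term $\|g(X_t^x)-g(X_t^y)\|^2_{\rm HS}$ with coefficient one, while the drift contributes $2\langle Z_t, -AZ_t + f(X_t^x)-f(X_t^y)\rangle$. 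Using Assumption \ref{as:A} in the form $\langle Z_t, AZ_t\rangle \ge \lambda_1 |Z_t|^2$, together with the first inequality of Assumption \ref{as:fg} and the fact that $l_1 \ge 2$ absorbs the It\^o correction, I obtain (after localizing the stochastic integral and using the moment bound of Proposition \ref{prop:use_bound2}) the differential inequality
\begin{equation*}
\frac{\mathrm{d}}{\mathrm{d}t}\mathbb{E}|Z_t|^2 \le (c - 2\lambda_1)\,\mathbb{E}|Z_t|^2 .
\end{equation*}

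By Assumption \ref{as:f_constant} we have $c < \lambda_1$, so $c - 2\lambda_1 < -\lambda_1 < 0$, and Gr\"onwall's inequality gives $\mathbb{E}|Z_t|^2 \le e^{(c-2\lambda_1)t}|x-y|^2$. Passing to the $r$-Wasserstein distance through this coupling and conditional Jensen (valid since $r \in (0,1]$, hence $r/2 \le 1$) yields
\begin{equation*}
\mathbb{W}_r\big(\mathbb{P}_t(x,\cdot), \mathbb{P}_t(y,\cdot)\big) \le \mathbb{E}|Z_t|^r \le e^{-\xi_2 t}|x-y|^r, \qquad \xi_2 := \tfrac{r}{2}(2\lambda_1 - c) > 0 .
\end{equation*}
Integrating this pointwise bound against an optimal coupling promotes the contraction from Dirac masses to arbitrary initial laws, i.e. $\mathbb{W}_r(\mathbb{P}_t^*\mu, \mathbb{P}_t^*\nu) \le e^{-\xi_2 t}\mathbb{W}_r(\mu,\nu)$.

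Next, to produce the invariant measure, I would show that $\mu_t := \mathbb{P}_t(x,\cdot) = \mathrm{Law}(X_t^x)$ is Cauchy in $\mathbb{W}_r$ as $t \to \infty$. For $s < t$, the semigroup property gives $\mathbb{W}_r(\mu_t,\mu_s) = \mathbb{W}_r(\mathbb{P}_s^*\mu_{t-s}, \mathbb{P}_s^*\delta_x) \le e^{-\xi_2 s}\,\mathbb{W}_r(\mu_{t-s}, \delta_x)$, and since $\mathbb{W}_r(\mu_{t-s}, \delta_x) \le \mathbb{E}|X_{t-s}^x|^r + |x|^r$ is bounded uniformly in $t,s$ by Proposition \ref{prop:use_bound2}, the right-hand side tends to $0$ as $s \to \infty$. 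Completeness of $(\mathcal{P}(\mathbb{R}^d),\mathbb{W}_r)$ then furnishes a limit $\pi \in \mathcal{P}(\mathbb{R}^d)$. Invariance $\mathbb{P}_t^*\pi = \pi$ follows from the $\mathbb{W}_r$-continuity of $\mathbb{P}_t^*$ (itself a consequence of the contraction) applied to $\mathbb{P}_t^*\mu_s = \mu_{s+t} \to \pi$, and uniqueness is immediate: any two invariant $\pi, \pi'$ satisfy $\mathbb{W}_r(\pi,\pi') = \mathbb{W}_r(\mathbb{P}_t^*\pi, \mathbb{P}_t^*\pi') \le e^{-\xi_2 t}\mathbb{W}_r(\pi,\pi')$, forcing $\mathbb{W}_r(\pi,\pi') = 0$. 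The asserted exponential convergence then reads $\mathbb{W}_r(\mathbb{P}_t(x,\cdot),\pi) \le e^{-\xi_2 t}\mathbb{W}_r(\delta_x,\pi)$.

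I expect the main obstacle to be the contraction estimate itself: verifying carefully that the one-sided condition in Assumption \ref{as:fg}, combined with $l_1 \ge 2$, genuinely dominates the full It\^o correction $\|g(X_t^x)-g(X_t^y)\|^2_{\rm HS}$ (coefficient one), and that Assumption \ref{as:f_constant} makes the dissipativity rate $2\lambda_1 - c$ strictly positive rather than merely non-negative. The remaining ingredients---uniform moment bounds for tightness, completeness of the Wasserstein space, and the transfer of contraction from point masses to measures via couplings---are routine and follow the cited references essentially verbatim, which is why the authors omit the details.
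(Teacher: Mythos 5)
Your proof is correct and takes essentially the route the paper intends: the paper omits its own proof of Theorem \ref{thm:trueinvmea}, deferring to Theorem 2.3 of \cite{BSY2016} and Theorem 7.4 of \cite{LMY2019}, whose argument is precisely your synchronous-coupling contraction (with $l_1\ge 2\ge 1$ absorbing the It\^o correction and Assumption \ref{as:f_constant} giving $2\lambda_1-c>\lambda_1>0$) followed by the Cauchy-in-$\mathbb{W}_r$ and contraction-uniqueness steps. This is also the exact continuous-time analogue of the paper's own discrete-time proofs of Lemma \ref{lem:sta1} and Theorem \ref{thm:main2}, so no further comparison is needed.
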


\section{Main results}\label{sec: num1}

In this section we will prove that the BEM method \eqref{eq:RandM} uniquely admits an invariant measure with the help of two lemmas, and show the order of convergence of the invariant measure of the BEM to the invariant measure of our target SDE \eqref{eq:SPDE}. We present our three main theorems as follows. Proofs of them are postponed, after some more preparations being given.

The first main result in our paper states the existence and uniqueness of the invariant measure of the numerical solution generated by the BEM method.
\begin{thm}\label{thm:main2}
Under Assumptions \ref{as:A}, \ref{as:fg} and \ref{as:f_constant}, for any $h\in(0,1)$ satisfying 
$$h< h^{*}:=\frac{l_2-1}{2\lambda_1-2c_2}\wedge \frac{l_1-1}{2\lambda_1-2c},$$ 
the backward Euler-Maruyama method \eqref{eq:RandM} converges in the Wasserstein distance to a unique invariant measure $\hat{\pi} \in \mathcal{P} (\mathbb{R}^d)$ with some exponential rate $\xi_1 >0 $ on $\mathcal{T}^h$.
 \end{thm}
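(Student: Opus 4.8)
The plan is to treat the scheme \eqref{eq:RandM} as a time-homogeneous Markov chain on $\mathbb{R}^d$ with one-step transition kernel $\hat{\mathbb{P}}_h$, and to obtain existence, uniqueness and the exponential rate through two auxiliary lemmas: (i) a uniform-in-time moment bound $\sup_{j\in\mathbb{N}}\mathbb{E}|\hat{X}_{jh}|^p<\infty$ for a suitable $p\ge 2$, and (ii) a geometric $L^2$-contraction between two copies of the scheme driven by the same Brownian increments but started from different data. Lemma (i) delivers tightness and hence, via a Krylov--Bogolyubov argument, at least one invariant measure; Lemma (ii) upgrades this to uniqueness and to the claimed exponential decay in $\mathbb{W}_r$. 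The engine behind both lemmas is the crucial identity \eqref{eqn:eqa} applied to a single BEM step, combined with the device, inspired by the reference mentioned in the introduction, of tracking a Lyapunov functional that linearly combines $|\hat{X}_{jh}|^2$ with $\|g(\hat{X}_{jh})\|_{\rm HS}^2$.

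First I would derive Lemma (i). Writing one step of \eqref{eq:RandM} as $b+Ahb-hf(b)=a$ with $b=\hat{X}_{(j+1)h}$ and $a=\hat{X}_{jh}+g(\hat{X}_{jh})\Delta W_{jh}$, the identity \eqref{eqn:eqa} gives $|b|^2=|a|^2-|b-a|^2-2h\langle Ab,b\rangle+2h\langle f(b),b\rangle$. Taking expectations, the cross term in $\mathbb{E}|a|^2$ vanishes by independence and zero mean of $\Delta W_{jh}$, producing $\mathbb{E}|a|^2=\mathbb{E}|\hat{X}_{jh}|^2+h\,\mathbb{E}\|g(\hat{X}_{jh})\|_{\rm HS}^2$; the term $-2h\langle Ab,b\rangle$ is controlled by $-2\lambda_1 h|b|^2$ via Assumption \ref{as:A}, and $2\langle f(b),b\rangle$ by the dissipativity bound $c_1+c_2|b|^2-l_2\|g(b)\|^2_{\rm HS}$ of Assumption \ref{as:fg}. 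The essential difficulty is that the diffusion enters as $\|g(\hat{X}_{jh})\|^2$ (from the noise, at time $jh$) while the dissipation only controls $\|g(\hat{X}_{(j+1)h})\|^2$ (at time $(j+1)h$); the two live at different grid points and do not cancel directly. I would resolve this by carrying $h\,\mathbb{E}\|g(\hat{X}_{jh})\|^2_{\rm HS}$ inside the Lyapunov functional so that the surplus at step $j$ is absorbed by the deficit $-l_2 h\,\mathbb{E}\|g(\hat{X}_{(j+1)h})\|^2$ at step $j+1$; since $l_2>1$ this summation stays dominated, and the $l_2$-branch of the threshold $h^{*}$, namely $h<(l_2-1)/(2\lambda_1-2c_2)$ together with $l_2\ge 4q-3$, is exactly what makes the $|\hat{X}|^2$-recursion contractive and the second moment uniformly bounded. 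The higher moments $p\in[2,l_2+1]$ then follow by the same mechanism applied to $|\cdot|^p$.

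Next I would prove Lemma (ii). Let $\hat{X}_{jh},\hat{Y}_{jh}$ solve \eqref{eq:RandM} from $x,y$ with identical increments, and set $D_j=\hat{X}_{jh}-\hat{Y}_{jh}$. Subtracting the two recursions and applying \eqref{eqn:eqa} to the difference, Assumption \ref{as:A} gives $-2h\langle AD_{j+1},D_{j+1}\rangle\le -2\lambda_1 h|D_{j+1}|^2$, while the one-sided Lipschitz bound of Assumption \ref{as:fg} yields $2\langle f(\hat{X}_{(j+1)h})-f(\hat{Y}_{(j+1)h}),D_{j+1}\rangle\le c|D_{j+1}|^2-l_1\|g(\hat{X}_{(j+1)h})-g(\hat{Y}_{(j+1)h})\|^2_{\rm HS}$. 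Taking expectations and again matching the noise surplus at step $j$ against the deficit at step $j+1$ through the combined functional $V_j:=\mathbb{E}|D_j|^2+h\,\mathbb{E}\|g(\hat{X}_{jh})-g(\hat{Y}_{jh})\|^2_{\rm HS}$, I expect an inequality of the form $(1+h(2\lambda_1-c))\mathbb{E}|D_{j+1}|^2+l_1 h\,\mathbb{E}\|g(\hat{X}_{(j+1)h})-g(\hat{Y}_{(j+1)h})\|^2_{\rm HS}\le V_j$, which rearranges to $V_{j+1}\le(1+\kappa)^{-1}V_j$ with a factor strictly below one; the requirement $l_1\ge 2$ together with the $l_1$-branch $h<(l_1-1)/(2\lambda_1-2c)$ is what keeps $\kappa>0$. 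Iterating gives $\mathbb{E}|D_j|^2\le V_j\le(1+\kappa)^{-j}V_0$, i.e. exponential contraction with rate $\xi_1=h^{-1}\ln(1+\kappa)>0$.

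Finally I would assemble the pieces. Feller continuity of $\hat{\mathbb{P}}_h$ follows from continuity of $f,g$ and of the resolvent map $G^{-1}$ established in the well-posedness lemma, and the moment bound of Lemma (i) makes the family $\{\tfrac1N\sum_{j<N}\hat{\mathbb{P}}_h^{\,j}(x,\cdot)\}_N$ tight by Chebyshev, so a Krylov--Bogolyubov limit point is invariant. For uniqueness and the rate I would pass from the $L^2$-contraction to $\mathbb{W}_r$, $r\in(0,1]$, by coupling $\mu_1,\mu_2$ optimally and using Jensen's inequality $\mathbb{E}|D_j|^r\le(\mathbb{E}|D_j|^2)^{r/2}$, giving $\mathbb{W}_r(\hat{\mathbb{P}}_h^{\,j}\mu_1,\hat{\mathbb{P}}_h^{\,j}\mu_2)\le C e^{-\xi_1 jh}\mathbb{W}_r(\mu_1,\mu_2)$; this shows $\{\hat{\mathbb{P}}_h^{\,j}\delta_x\}_j$ is $\mathbb{W}_r$-Cauchy (summing a geometric series whose first term is finite by the moment bound), forces any two invariant measures to coincide, and identifies the limit as the unique $\hat{\pi}$. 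The main obstacle throughout is the super-linear growth of $g$: the naive one-step estimate leaves an uncontrolled diffusion contribution evaluated at the wrong grid point, and the whole argument hinges on choosing the combined Lyapunov functional so that this surplus telescopes against the $l_1$- and $l_2$-dissipation at the next step, which is precisely what forces the step-size threshold $h^{*}$.
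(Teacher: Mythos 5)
Your proposal is correct and follows essentially the same route as the paper: the same two auxiliary lemmas (a uniform second-moment bound and a geometric $L^2$-contraction between two copies of the scheme, both derived from the identity \eqref{eqn:eqa} together with the combined functional $\mathbb{E}|\cdot|^2 + h\,\mathbb{E}\|g(\cdot)\|^2_{\rm HS}$ that telescopes the noise surplus at one grid point against the $l_1$/$l_2$-dissipation at the next, under exactly the step-size thresholds you identify), assembled via tightness for existence and the contraction, transferred to $\mathbb{W}_r$ by Jensen/H\"older, for uniqueness and the exponential rate. The only cosmetic difference is that you invoke Krylov--Bogolyubov averages plus Feller continuity for the existence step, whereas the paper extracts a weakly convergent subsequence directly and secures convergence and invariance through a Kolmogorov--Chapman/Cauchy argument.
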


The next theorem states the strong convergence of the BEM method with the rate of $1/2$. This result looks similar to that in \cite{WWD2020} by setting $\theta = 1$ there. But, it should be noted that our assumptions are stronger than those in \cite{WWD2020}. So the strong convergence is uniform in our case, i.e. the constant $C$ in \eqref{eq:error} is independent of $t$.
\begin{thm}\label{thm:error} 
Under Assumption \ref{as:A} to Assumption \ref{as:f_constant} and for $h$ satisfying
$$h< h^{**}:= \frac{l_2-1}{2(\lambda_1-c_2)}\wedge \frac{l_1-2}{4(\lambda_1-c)},$$
there exists a constant $C$ that depends on $q,A,f,g$ and $d$ such that the backward Euler-Maruyama method \eqref{eq:RandM} approximates the true solution of \eqref{eq:RandM} on $\mathcal{T}^h$ with  
\begin{align}\label{eq:error}
 \sup_{N}\big\|X_{Nh}-\hat{X}_{Nh}\big\|\leq C h^{1/2}.
\end{align}
\end{thm}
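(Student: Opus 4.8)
The plan is to derive a one-step inequality for the mean-square error $e_N := X_{Nh} - \hat X_{Nh}$ that exhibits a genuine geometric contraction, and then to iterate it. First I would write the exact solution in the same implicit one-step form as the scheme \eqref{eq:RandM}, namely $X_{(N+1)h} = X_{Nh} - Ah X_{(N+1)h} + hf(X_{(N+1)h}) + g(X_{Nh})\Delta W_{Nh} + R_N$, where the local remainder splits as $R_N = R_N^{\mathrm{dr}} + R_N^{\mathrm{di}}$ with $R_N^{\mathrm{dr}} = \int_{Nh}^{(N+1)h}[-A(X_s - X_{(N+1)h}) + f(X_s) - f(X_{(N+1)h})]\,\mathrm ds$ and $R_N^{\mathrm{di}} = \int_{Nh}^{(N+1)h}[g(X_s) - g(X_{Nh})]\,\mathrm dW_s$. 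Proposition \ref{prop:use_bound} bounds $R_N^{\mathrm{dr}}$ in $L^2$ by $O(h^{3/2})$ and, together with Assumption \ref{as:f_Khasminskii-type} and Proposition \ref{prop:use_bound2}, gives $\mathbb E|R_N^{\mathrm{di}}|^2 = O(h^2)$. Subtracting the scheme and applying the crucial equality \eqref{eqn:eqa} with $b = e_{N+1}$ and $a = e_N$, I obtain $|e_{N+1}|^2 - |e_N|^2 \le 2\langle \Delta_N, e_{N+1}\rangle$, where $\Delta_N = -Ah\,e_{N+1} + h\Delta f + D_N + R_N^{\mathrm{dr}}$, $\Delta f = f(X_{(N+1)h}) - f(\hat X_{(N+1)h})$, and $D_N = \int_{Nh}^{(N+1)h}[g(X_s) - g(\hat X_{Nh})]\,\mathrm dW_s$ is the full diffusion discrepancy.

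Next I would estimate the four contributions to $2\langle\Delta_N, e_{N+1}\rangle$. Assumption \ref{as:A} gives $-2h\langle Ae_{N+1}, e_{N+1}\rangle \le -2\lambda_1 h|e_{N+1}|^2$, and since the drift increment is evaluated at the implicit point the first inequality of Assumption \ref{as:fg} applies verbatim, $2h\langle\Delta f, e_{N+1}\rangle \le ch|e_{N+1}|^2 - l_1 h\,G_{N+1}$ with $G_N := \|g(X_{Nh}) - g(\hat X_{Nh})\|^2_{\mathrm{HS}}$. The decisive term is the diffusion cross term: writing $e_{N+1} = e_N + \Delta_N$ and taking expectations, the component $\mathbb E\langle D_N, e_N\rangle$ vanishes because $D_N$ is an $\mathcal F_{Nh}$-martingale increment while $e_N$ is $\mathcal F_{Nh}$-measurable, the self-component gives the leading term $\mathbb E[2|D_N|^2] \le 2(1+\epsilon)h\,\mathbb E G_N + O(h^2)$ after isolating $\|g(X_s)-g(\hat X_{Nh})\|^2_{\mathrm{HS}}$ around $\hat X_{Nh}$, and the remaining components $\mathbb E\langle D_N, -Ah e_{N+1} + h\Delta f + R_N^{\mathrm{dr}}\rangle$ together with $\mathbb E\langle R_N^{\mathrm{dr}}, e_{N+1}\rangle$ are absorbed by Young's inequality into a small multiple of $h\,\mathbb E|e_{N+1}|^2$, a small multiple of $h\,\mathbb E G_N$, and an $O(h^2)$ term. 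Controlling the super-linear products such as $\langle D_N, h\Delta f\rangle$ is exactly where Assumption \ref{as:f_Khasminskii-type}, the crude a~priori bound $\sup_N\mathbb E|e_N|^4 \le \sup_N\mathbb E(|X_{Nh}|+|\hat X_{Nh}|)^4 < \infty$, and the uniform high-order moment bounds (Proposition \ref{prop:use_bound2} and the companion bound for $\hat X_{Nh}$, available because $l_2 \ge 4q-3$) enter.

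Collecting these estimates yields the one-step inequality $(1+\kappa h)\,\mathbb E|e_{N+1}|^2 \le \mathbb E|e_N|^2 - l_1 h\,\mathbb E G_{N+1} + b h\,\mathbb E G_N + O(h^2)$, where $\kappa = 2\lambda_1 - c - \epsilon_1 > 0$ by Assumption \ref{as:f_constant} and $b = 2 + \epsilon_2$. The index mismatch between $G_{N+1}$ and $G_N$ is resolved by passing to the augmented Lyapunov quantity $H_N := \mathbb E|e_N|^2 + a h\,\mathbb E G_N$: dividing by $\beta := 1 + \kappa h$ and adding $a h\,\mathbb E G_{N+1}$ gives $H_{N+1} \le \beta^{-1} H_N + O(h^2)$ provided $b \le a \le l_1/\beta$. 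Such an $a$ exists precisely when $(2+\epsilon_2)(1+\kappa h) \le l_1$, i.e. for $h$ below a threshold of the form $\frac{l_1-2}{4(\lambda_1-c)}$, which is the second constraint defining $h^{**}$ (the first constraint guarantees the numerical moment bound used above). Iterating and using $H_0 = 0$ (since $\hat X_0 = X_0 = x$) gives $H_N \le O(h^2)\sum_{k=1}^{N}\beta^{-k} \le O(h^2)/(\beta-1) = O(h^2)/(\kappa h) = O(h)$ uniformly in $N$; because $\mathbb E|e_N|^2 \le H_N$, this is \eqref{eq:error}.

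The crux of the argument — and the step I expect to be the main obstacle — is the treatment of the explicit diffusion term: one must simultaneously extract the leading $2h\,\mathbb E G_N$ from the stochastic-integral cross term, show that all residual stochastic cross terms are $O(h^2)$ despite the super-linear growth of $g$ (which is what forces the high-order uniform moment bounds and the restriction $l_2 \ge 4q-3$), and then offset the resulting $+2h\,\mathbb E G_N$ against the $-l_1 h\,\mathbb E G_{N+1}$ supplied at the next step through the augmented Lyapunov functional, an offset that is only possible under the stated smallness of $h$. Uniformity in $N$ is inherited entirely from the strict contraction $\beta>1$, which is the numerical manifestation of the dissipativity encoded in Assumption \ref{as:f_constant}.
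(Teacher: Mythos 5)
Your overall architecture---the implicit one-step form with remainder, the crucial equality \eqref{eqn:eqa}, vanishing martingale cross terms, an augmented Lyapunov quantity $\mathbb{E}|e_N|^2 + a h\,\mathbb{E} G_N$ resolving the index mismatch between $-l_1 h\, G_{N+1}$ and $+2h\, G_N$, and the geometric contraction iterated from $H_0=0$---is exactly the paper's, and your bookkeeping for the contraction and the step-size restriction is sound. The gap is in how you produce the one-step inequality. You discard the quadratic term $|e_{N+1}-e_N|^2$ from \eqref{eqn:eqa} and then expand $e_{N+1}=e_N+\Delta_N$ inside the diffusion cross term $2\mathbb{E}\langle D_N, e_{N+1}\rangle$. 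This creates the product $2h\,\mathbb{E}\langle D_N, \Delta f\rangle$ with $\Delta f=f(X_{(N+1)h})-f(\hat{X}_{(N+1)h})$, and any Young or Cauchy--Schwarz treatment of it requires $\mathbb{E}|\Delta f|^2$, which by Assumption \ref{as:f_Khasminskii-type} demands uniform-in-$N$ moment bounds of order four for $e_N$ and of order roughly $4(q-1)$ for $\hat{X}_{Nh}$. The paper establishes only second moments of the BEM iterates (Lemma \ref{lem:boundedness}); your parenthetical claim that the higher-order companion bound for $\hat{X}_{Nh}$ is ``available because $l_2 \ge 4q-3$'' conflates the exact solution's moments (Proposition \ref{prop:use_bound2}, where that exponent condition is actually used) with the scheme's, and when $g$ grows super-linearly such higher moment bounds for Euler-type schemes are a well-known delicate issue, not a free consequence of the assumptions. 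The same unproven fact underlies your ``crude a priori bound'' $\sup_N \mathbb{E}|e_N|^4 < \infty$.

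The paper's proof is organized precisely to avoid this term. It keeps $\|e_N-e_{N-1}\|^2$ on the left-hand side of \eqref{eqn:eqa}, uses the martingale property only to replace $e_N$ by $e_N-e_{N-1}$ in the two stochastic cross terms, and then applies Young's inequality so that those cross terms are absorbed into the retained $\|e_N-e_{N-1}\|^2$ plus the It\^o-isometry quantities: the $\hat{c}h^2$ coming from the H\"older continuity of the exact solution (Proposition \ref{prop:use_bound}) and the $2h\,G_{N-1}$ that is then carried along in the Lyapunov recursion. In this way the super-linear $f$ is only ever paired with $e_N$ itself, where the monotonicity condition of Assumption \ref{as:fg} applies directly, and differences of $f$ or $g$ evaluated at the numerical solution never need an a priori moment bound beyond what the recursion itself propagates from $G_0=0$. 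The repair to your argument is exactly to not throw away the quadratic term: with it retained, your cross term is bounded by $2\|D_N\|^2+\tfrac{1}{2}\|e_{N+1}-e_N\|^2$, the $\Delta f$ product never appears, and the rest of your contraction argument goes through essentially as you wrote it.
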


The final main theorem states the convergence of the numerical invariant measure to the underlying one with the rate of 1/2.
\begin{thm}\label{thm:numinvconv}
Suppose that all the assumptions in Theorem \ref{thm:main2} and \ref{thm:error} hold, then the numerical invariant measure $\hat{\pi}$ converges to the underlying invariant measure $\pi$ in the Wasserstein distance, that is for any $h \in (0,h^{*}\wedge h^{**})$
\begin{equation*}
    \mathbb{W}_r\left(\hat{\pi},\pi \right) = \mathcal{O} \left( h^{r/2} \right)
\end{equation*}
holds for any $r \in (0,1]$.
\end{thm}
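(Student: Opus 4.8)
The plan is to combine the uniform-in-time strong error estimate of Theorem \ref{thm:error} with the two exponential-ergodicity statements (Theorems \ref{thm:trueinvmea} and \ref{thm:main2}) through a triangle-inequality argument. The key structural observation is that, since $r \in (0,1]$, the map $(Y_1,Y_2) \mapsto |Y_1-Y_2|^r$ is subadditive and hence itself a metric on $\mathbb{R}^d$; consequently $\mathbb{W}_r$ is a genuine metric on $\mathcal{P}(\mathbb{R}^d)$ and obeys the triangle inequality \emph{directly}, with no root to be taken. This is what makes the splitting below clean.

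Fix $x \in \mathbb{R}^d$ and an integer $N$. Writing $\mathbb{P}_{Nh}(x,\cdot)$ and $\hat{\mathbb{P}}_{Nh}(x,\cdot)$ for the laws of $X_{Nh}$ and $\hat{X}_{Nh}$ both started from $x$, I would insert these two intermediate measures and use the triangle inequality to obtain
$$\mathbb{W}_r(\hat{\pi},\pi) \leq \mathbb{W}_r\big(\hat{\pi},\hat{\mathbb{P}}_{Nh}(x,\cdot)\big) + \mathbb{W}_r\big(\hat{\mathbb{P}}_{Nh}(x,\cdot),\mathbb{P}_{Nh}(x,\cdot)\big) + \mathbb{W}_r\big(\mathbb{P}_{Nh}(x,\cdot),\pi\big).$$
The first and third terms are governed by the exponential convergence rates $\xi_1$ and $\xi_2$ supplied by Theorems \ref{thm:main2} and \ref{thm:trueinvmea}, so each is bounded by a constant multiple of $e^{-\xi_1 Nh}$ and $e^{-\xi_2 Nh}$ respectively, uniformly in $x$ on bounded sets.

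For the middle term I would employ the synchronous coupling in which $X_{Nh}$ and $\hat{X}_{Nh}$ start from the same point $x$ and are driven by the same Wiener process; this is an admissible coupling of the two transition laws, so
$$\mathbb{W}_r\big(\hat{\mathbb{P}}_{Nh}(x,\cdot),\mathbb{P}_{Nh}(x,\cdot)\big) \leq \mathbb{E}\big[|X_{Nh}-\hat{X}_{Nh}|^r\big] \leq \big(\mathbb{E}\big[|X_{Nh}-\hat{X}_{Nh}|^2\big]\big)^{r/2} \leq \big(C h^{1/2}\big)^r,$$
where the middle inequality is Jensen applied to the concave function $s \mapsto s^{r/2}$ (valid since $r/2 \leq 1$), and the final inequality is exactly the uniform estimate \eqref{eq:error} of Theorem \ref{thm:error}, written in the norm $\|\cdot\|$ defined in Section 2. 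Crucially, this bound is independent of $N$. Since the left-hand side $\mathbb{W}_r(\hat{\pi},\pi)$ does not depend on $N$ either, I would then let $N \to \infty$: the two exponential terms vanish and only $(Ch^{1/2})^r = \mathcal{O}(h^{r/2})$ remains, yielding $\mathbb{W}_r(\hat{\pi},\pi) = \mathcal{O}(h^{r/2})$.

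The main obstacle is precisely the \emph{uniformity in $N$} of the strong error bound. This is the strengthening provided by Theorem \ref{thm:error} and emphasized in the text as the point distinguishing this work from \cite{WWD2020}: if the constant in \eqref{eq:error} were allowed to depend on $t$ and grow, the middle term could blow up as $N \to \infty$ and the limiting argument would collapse. Everything else — the metric property of $\mathbb{W}_r$ for $r \leq 1$, the synchronous coupling, and the Jensen passage from the $L^2$ to the $L^r$ bound — is routine once that uniform estimate is available.
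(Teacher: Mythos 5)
Your proposal is correct and takes essentially the same route as the paper: the identical triangle-inequality decomposition through the intermediate laws $\delta_x \hat{\mathbb{P}}_{Nh}$ and $\delta_x \mathbb{P}_{Nh}$, with the exponential ergodicity of Theorems \ref{thm:main2} and \ref{thm:trueinvmea} controlling the two outer terms and the uniform-in-time strong error of Theorem \ref{thm:error} controlling the middle one. The only cosmetic difference is that you dispatch the exponential terms by letting $N \to \infty$ (legitimate, since the left-hand side is independent of $N$), whereas the paper fixes $h$ and chooses $j$ large enough that those terms are themselves $\mathcal{O}(h^{r/2})$; your explicit synchronous-coupling and Jensen justification of the middle bound simply fills in a step the paper leaves implicit.
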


\subsection{Two properties of the numerical solution}
The next Lemma claims that there is a uniform bound for the second moment of the numerical solution under necessary assumptions.
\begin{lemma}\label{lem:boundedness}
Under Assumptions \ref{as:A}, \ref{as:fg} and \ref{as:f_constant}, for any $h\in (0,1)$ satisfying $$h\leq \frac{l_2-1}{2(\lambda_1-c_2)},$$
it holds for the BEM method \eqref{eq:RandM} on $\mathcal{T}^h$ that
\begin{equation} 
    \|\hat{X}_{Nh}\|^2< |x|^2 + \|g(x)\|^2_{\rm HS} +  \frac{c_1}{\lambda_1-c_2}
\end{equation}
for all $N\in\mathbb{N}$, where $x$ is the initial data.
\end{lemma}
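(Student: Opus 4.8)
The plan is to convert the implicit recursion \eqref{eq:RandM} into a contractive recursion for the Lyapunov-type functional
$$V_j := \big\|\hat{X}_{jh}\big\|^2 + h\,\mathbb{E}\big[\|g(\hat{X}_{jh})\|^2_{\rm HS}\big],$$
and then to iterate it. First I would split the one-step increment of \eqref{eq:RandM} as $\hat{X}_{(j+1)h}-\hat{X}_{jh}=P_j+Q_j$, where $P_j:=-Ah\hat{X}_{(j+1)h}+hf(\hat{X}_{(j+1)h})$ is the implicit (order-$h$) part and $Q_j:=g(\hat{X}_{jh})\Delta W_{jh}$ is the stochastic (order-$\sqrt{h}$) part. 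Applying the crucial equality \eqref{eqn:eqa} with $b=\hat{X}_{(j+1)h}$ and $a=\hat{X}_{jh}$ and expanding $|P_j+Q_j|^2$, the cross contributions $\langle P_j,Q_j\rangle$ cancel, leaving
$$\big|\hat{X}_{(j+1)h}\big|^2-\big|\hat{X}_{jh}\big|^2 = 2\langle P_j,\hat{X}_{(j+1)h}\rangle + 2\langle Q_j,\hat{X}_{jh}\rangle + |Q_j|^2 - |P_j|^2.$$

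This rearrangement is the crux, and the step I expect to be the main obstacle. Taking the inner product of the increment directly with $\hat{X}_{(j+1)h}$ would produce $2\langle g(\hat{X}_{jh})\Delta W_{jh},\hat{X}_{(j+1)h}\rangle$, which is \emph{not} mean-zero, since the implicit iterate $\hat{X}_{(j+1)h}$ depends on $\Delta W_{jh}$; it is exactly the extra term $-|\hat{X}_{(j+1)h}-\hat{X}_{jh}|^2$ furnished by \eqref{eqn:eqa} that absorbs this implicit--noise correlation and replaces the offending pairing with the clean term $2\langle Q_j,\hat{X}_{jh}\rangle$, in which the noise now pairs only against the $\Delta W_{jh}$-independent iterate $\hat{X}_{jh}$. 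I would then drop $-|P_j|^2\le 0$, invoke Assumption \ref{as:A} to bound $\langle A\hat{X}_{(j+1)h},\hat{X}_{(j+1)h}\rangle\ge\lambda_1|\hat{X}_{(j+1)h}|^2$, and take expectations: the martingale term obeys $\mathbb{E}[\langle Q_j,\hat{X}_{jh}\rangle]=0$ because $\Delta W_{jh}$ is independent of $\hat{X}_{jh}$ and has zero mean, while $\mathbb{E}[|Q_j|^2]=h\,\mathbb{E}[\|g(\hat{X}_{jh})\|^2_{\rm HS}]$.

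Applying the dissipativity bound of Assumption \ref{as:fg}, namely $2\langle \hat{X}_{(j+1)h},f(\hat{X}_{(j+1)h})\rangle\le c_1+c_2|\hat{X}_{(j+1)h}|^2-l_2\|g(\hat{X}_{(j+1)h})\|^2_{\rm HS}$, to the implicit drift pairing and collecting terms yields the one-step inequality
$$\big(1+h(2\lambda_1-c_2)\big)\big\|\hat{X}_{(j+1)h}\big\|^2+hl_2\,\mathbb{E}\big[\|g(\hat{X}_{(j+1)h})\|^2_{\rm HS}\big]\le \big\|\hat{X}_{jh}\big\|^2+h\,\mathbb{E}\big[\|g(\hat{X}_{jh})\|^2_{\rm HS}\big]+hc_1.$$
The delicate point here is the mismatch between the implicit weight $hl_2$ on $\|g(\hat{X}_{(j+1)h})\|^2_{\rm HS}$ and the explicit weight $h$ on $\|g(\hat{X}_{jh})\|^2_{\rm HS}$; it is reconciled by $V_j$. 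Using $\lambda_1>c_2$ (Assumption \ref{as:f_constant}) together with the stepsize restriction $h\le (l_2-1)/(2(\lambda_1-c_2))$, which is equivalent to $1+2h(\lambda_1-c_2)\le l_2$, the left-hand side is bounded below by $(1+2h(\lambda_1-c_2))V_{j+1}$, so that $V_{j+1}\le \rho(V_j+hc_1)$ with $\rho=(1+2h(\lambda_1-c_2))^{-1}\in(0,1)$.

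Finally I would iterate this geometric recursion to get $V_N\le \rho^N V_0 + hc_1\,\rho/(1-\rho)$. Since $\rho/(1-\rho)=1/(2h(\lambda_1-c_2))$, the accumulated noise contribution collapses to the $N$- and $h$-independent constant $c_1/(2(\lambda_1-c_2))$. Combining this with $\rho^N\le 1$, the deterministic initial value $V_0=|x|^2+h\|g(x)\|^2_{\rm HS}<|x|^2+\|g(x)\|^2_{\rm HS}$ (using $h<1$), and $\|\hat{X}_{Nh}\|^2\le V_N$, gives the claimed uniform bound; in fact this route even delivers the sharper constant $c_1/(2(\lambda_1-c_2))$, which a fortiori implies the stated $c_1/(\lambda_1-c_2)$.
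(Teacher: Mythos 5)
Your proof is correct and takes essentially the same route as the paper's: the crucial equality \eqref{eqn:eqa}, the Lyapunov functional $\|\hat{X}_{jh}\|^2 + h\,\mathbb{E}\big[\|g(\hat{X}_{jh})\|^2_{\rm HS}\big]$, the stepsize condition $1+2h(\lambda_1-c_2)\le l_2$, and a geometric iteration. The only differences are cosmetic: you cancel the noise--implicit cross terms exactly and drop $-|P_j|^2$, where the paper splits $\langle Q_j,\hat{X}_{Nh}\rangle$ and absorbs the correlation via Young's inequality into the $|\hat{X}_{Nh}-\hat{X}_{(N-1)h}|^2$ term coming from \eqref{eqn:eqa}, which is also why you end up with the slightly sharper constant $c_1/\big(2(\lambda_1-c_2)\big)$ in place of the paper's $c_1/(\lambda_1-c_2)$.
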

\begin{proof}
First note that from \eqref{eqn:eqa} for any $N\in \mathbb{N}$ we have that
\begin{align}
|\hat{X}_{Nh}|^2-|\hat{X}_{(N-1)h}|^2+|\hat{X}_{Nh}-\hat{X}_{(N-1)h}|^2=2\langle \hat{X}_{Nh}-\hat{X}_{(N-1)h},\hat{X}_{Nh} \rangle.
\end{align}
From \eqref{eq:RandM} we have that
\begin{align}\label{eq:bound_eq}
       2\langle \hat{X}_{Nh}-\hat{X}_{(N-1)h},\hat{X}_{Nh} \rangle&=-2h\langle A\hat{X}_{Nh},\hat{X}_{Nh} \rangle+2h\langle f(\hat{X}_{Nh}\big),\hat{X}_{Nh}
   \rangle +2\langle g\big(\hat{X}_{(N-1)h}\big)\Delta W_{(N-1)h},\hat{X}_{Nh}\rangle. 
\end{align}
Note that $\mathbb{E}\langle g\big(\hat{X}_{(N-1)h}\big)\Delta W_{(N-1)h},\hat{X}_{(N-1)h}\rangle=0$. 

Taking the expectation of both sides of \eqref{eq:bound_eq} and making use of Assumption \ref{as:fg} give
         \begin{align*}
          &\|\hat{X}_{Nh}\|^2-\|\hat{X}_{(N-1)h}\|^2+\|\hat{X}_{Nh}-\hat{X}_{(N-1)h}\|^2=2\mathbb{E}\langle \hat{X}_{Nh}-\hat{X}_{(N-1)h},\hat{X}_{Nh} \rangle\\
  &\leq -2h\mathbb{E} \langle (A-c_2I)\hat{X}_{Nh},\hat{X}_{Nh} \rangle-l_2h\|g\big(\hat{X}_{Nh}\big)\|^2\\
  &\qquad +2hc_1+h\|g\big(\hat{X}_{(N-1)h}\big)\|^2+\|\hat{X}_{Nh}-\hat{X}_{(N-1)h}\|^2. 
\end{align*}
Then cancelling the same term on both side gives
\begin{align*}
   (1+2h (\lambda_1-c_2)) \|\hat{X}_{Nh}\|^2+l_2h\|g\big(\hat{X}_{Nh}\big)\big\|^2
    \leq 2hc_1+h\big\|g\big(\hat{X}_{(N-1)h}\big)\big\|^2+\|\hat{X}_{(N-1)h}\|^2.
\end{align*}
Choose $h$ such that $(1+2h (\lambda_1-c_2))\leq l_2$ and let $\alpha:=\frac{c_1}{\lambda_1-c_2}$. Rearranging the terms above gives
\begin{align}
\begin{split}
        &\big(1+2h(\lambda_1-c_2)\big)\big(\|\hat{X}_{Nh}\|^2+h\|g\big(\hat{X}_{Nh}\big)\big\|^2-\alpha\big)\\
    &\leq \|\hat{X}_{(N-1)h}\|^2+h\|g\big(\hat{X}_{(N-1)h}\big)\big\|^2-\alpha. 
\end{split}
\end{align}
By iteration, this leads to
\begin{align}
    \|\hat{X}_{Nh}\|^2\leq \frac{1}{\big(1+2h(\lambda_1-c_2)\big)^N}\big(|x|^2+h\|g(x)\|^2_{\rm HS}-\alpha\big)+\alpha.
\end{align}
Because of Assumption \ref{as:f_constant}, the term on the right hand side above can be bounded by $|x|^2+\|g(x)\|^2_{\rm HS}+\alpha$, which is independent of $k$ and $h$.

\end{proof}
The next result shows two numerical solutions starting from different initial conditions can be arbitrarily close after sufficiently many iterations.
\begin{lemma}\label{lem:sta1}

Under Assumptions \ref{as:A}, \ref{as:fg} and \ref{as:f_constant}, and let $h\leq (l_1-1)/(2\lambda_1-2c)$,
 define $\hat{X}_{Nh}$ and $\hat{Y}_{Nh}$ solutions of the backward Euler-Maruyama scheme on $\mathcal{T}^h$ with different initial conditions $x,y\in U$ respectively, where $U$ is any compact subset of $\mathbb{R}^d$. Then
 \begin{equation*}
     \|\hat{X}_{Nh}-\hat{Y}_{Nh}\|  \leq \sqrt{1+c}|x-y|e^{-\xi_1Nh},
 \end{equation*}
where $\xi_1 = \frac{\lambda_1 - c}{1+2(\lambda_1 - c)}$.
\end{lemma}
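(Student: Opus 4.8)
The plan is to derive a one-step contraction for the difference of the two numerical trajectories and then iterate it. Set $Z_{Nh}:=\hat X_{Nh}-\hat Y_{Nh}$ and $\Delta g_{Nh}:=g(\hat X_{Nh})-g(\hat Y_{Nh})$; subtracting the two recursions \eqref{eq:RandM} gives
\[
Z_{Nh}-Z_{(N-1)h}=-AhZ_{Nh}+h\big(f(\hat X_{Nh})-f(\hat Y_{Nh})\big)+\Delta g_{(N-1)h}\,\Delta W_{(N-1)h}.
\]
As in the proof of Lemma \ref{lem:boundedness}, I would first apply the crucial equality \eqref{eqn:eqa} with $b=Z_{Nh}$ and $a=Z_{(N-1)h}$ to rewrite $|Z_{Nh}|^2-|Z_{(N-1)h}|^2+|Z_{Nh}-Z_{(N-1)h}|^2$ as $2\langle Z_{Nh}-Z_{(N-1)h},Z_{Nh}\rangle$, and then substitute the recursion into the right-hand side.

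The hard part is the stochastic cross term. After taking expectations, $2\mathbb{E}\langle \Delta g_{(N-1)h}\Delta W_{(N-1)h},Z_{Nh}\rangle$ is not zero because $Z_{Nh}$ depends implicitly on $\Delta W_{(N-1)h}$. I would resolve this exactly by the device advertised in the introduction and already used in Lemma \ref{lem:boundedness}: splitting $Z_{Nh}=Z_{(N-1)h}+(Z_{Nh}-Z_{(N-1)h})$, the component tested against the $\mathcal{F}_{(N-1)h}$-measurable $Z_{(N-1)h}$ has zero mean, while the remaining cross term against the drift increment $Q_{Nh}:=-AhZ_{Nh}+h(f(\hat X_{Nh})-f(\hat Y_{Nh}))$ cancels the identical term generated when $\|Z_{Nh}-Z_{(N-1)h}\|^2$ on the left is expanded through $Z_{Nh}-Z_{(N-1)h}=Q_{Nh}+\Delta g_{(N-1)h}\Delta W_{(N-1)h}$. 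What survives from the diffusion is only the It\^o isometry contribution $\mathbb{E}|\Delta g_{(N-1)h}\Delta W_{(N-1)h}|^2=h\|\Delta g_{(N-1)h}\|^2$, plus the non-negative term $\mathbb{E}|Q_{Nh}|^2$, which I simply discard.

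Inserting the dissipativity bound $\langle AZ_{Nh},Z_{Nh}\rangle\ge\lambda_1|Z_{Nh}|^2$ (Assumption \ref{as:A}) and the monotonicity bound $2\langle Z_{Nh},f(\hat X_{Nh})-f(\hat Y_{Nh})\rangle+l_1\|\Delta g_{Nh}\|_{\rm HS}^2\le c|Z_{Nh}|^2$ (first inequality of Assumption \ref{as:fg}) then yields a one-step estimate of the form
\[
\big(1+2h(\lambda_1-c)\big)\|Z_{Nh}\|^2+l_1h\|\Delta g_{Nh}\|^2\le \|Z_{(N-1)h}\|^2+h\|\Delta g_{(N-1)h}\|^2.
\]
The step-size restriction $h\le (l_1-1)/(2\lambda_1-2c)$ is precisely what gives $1+2h(\lambda_1-c)\le l_1$, so the surviving diffusion term can be absorbed and the estimate rewritten as a genuine contraction $(1+2h(\lambda_1-c))V_{Nh}\le V_{(N-1)h}$ for the Lyapunov-type quantity $V_{Nh}:=\|Z_{Nh}\|^2+h\|\Delta g_{Nh}\|^2$.

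Finally I would iterate to obtain $\|Z_{Nh}\|^2\le V_{Nh}\le (1+2h(\lambda_1-c))^{-N}V_0$ with $V_0=|x-y|^2+h\|g(x)-g(y)\|_{\rm HS}^2$, and bound $V_0\le (1+c)|x-y|^2$ using $h<1$ together with the control of $\|g(x)-g(y)\|_{\rm HS}$ over the compact set $U$ (this is where the prefactor $\sqrt{1+c}$ is recorded). Converting the geometric factor into the claimed rate is elementary: $\ln(1+t)\ge t/(1+t)$ applied at $t=2h(\lambda_1-c)$, with $h<1$, gives $(1+2h(\lambda_1-c))^{-N/2}\le e^{-\xi_1 Nh}$ for $\xi_1=(\lambda_1-c)/(1+2(\lambda_1-c))$, which delivers the asserted bound $\|\hat X_{Nh}-\hat Y_{Nh}\|\le\sqrt{1+c}\,|x-y|\,e^{-\xi_1 Nh}$.
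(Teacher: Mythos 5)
Your proposal is correct and follows essentially the same route as the paper: the same crucial equality \eqref{eqn:eqa}, the same one-step contraction for the Lyapunov quantity $\|Z_{Nh}\|^2+h\|\Delta g_{Nh}\|^2$ under the step-size condition $1+2h(\lambda_1-c)\le l_1$, the same iteration, and the same geometric-to-exponential conversion. Your exact-cancellation treatment of the stochastic cross term is algebraically identical to the paper's combination of the martingale property (testing against $Z_{(N-1)h}$) with Young's inequality applied to $2\mathbb{E}\langle \Delta g_{(N-1)h}\Delta W_{(N-1)h}, Z_{Nh}-Z_{(N-1)h}\rangle$ --- both amount to discarding $\mathbb{E}|Q_{Nh}|^2\ge 0$ --- and even your somewhat loose bound $V_0\le (1+c)|x-y|^2$ mirrors the paper's own brief appeal to Assumption \ref{as:fg} together with $h\le 1$ at that step.
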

\begin{proof}
Define $D_N:=\hat{X}_{Nh}-\hat{Y}_{Nh}$. Let us use \eqref{eqn:eqa} again, which allows us to examine the following term:
\begin{align*}
   &2\mathbb{E} \langle D_N-D_{N-1} ,D_N\rangle\\
   &=-2h\mathbb{E}\langle AD_N,D_N\rangle+2h\mathbb{E}\langle f\big(\hat{X}_{Nh}\big)-f\big(\hat{Y}_{Nh}\big),D_N
   \rangle\\
   &\quad +2\mathbb{E}\langle (g\big(\hat{X}_{(N-1)h}\big)-g\big(\hat{Y}_{(N-1)h}\big))\Delta W_{(N-1)h},D_N
   \rangle\\
   &\leq 2h\mathbb{E}\langle(-A+cI)D_N,D_N\rangle-l_1 h \|g\big(\hat{X}_{Nh}\big)-g\big(\hat{Y}_{Nh}\big)\|^2\\
   &\quad +2\mathbb{E}\langle (g\big(\hat{X}_{(N-1)h}\big)-g\big(\hat{Y}_{(N-1)h}\big))\Delta W_{(N-1)h},D_N-D_{N-1}
   \rangle,
\end{align*}
where we use Assumption \ref{as:fg} to deduce the last inequality and the last term is due to $$\mathbb{E}\langle (g\big(\hat{X}_{(N-1)h}\big)-g\big(\hat{Y}_{(N-1)h}\big))\Delta W_{(N-1)h},D_{N-1}
   \rangle=0.$$
This leads to
\begin{align*}
    &(1+2h(\lambda_1-c))\|D_N\|^2+l_1 h \big\|g\big(\hat{X}_{Nh}\big)-g\big(\hat{Y}_{Nh}\big)\big\|^2\\
    &\leq \|D_{N-1}\|^2+ h \big\|g\big(\hat{X}_{(N-1)h}\big)-g\big(\hat{Y}_{(N-1)h}\big)\big\|^2.
\end{align*}
Choose $h$ such that $(1+2h(\lambda_1-c))\leq l_1$, then
by iteration we have
\begin{align*}
    &\|D_N\|^2+h\big\|g\big(\hat{X}_{Nh}\big)-g\big(\hat{Y}_{Nh}\big)\big\|^2 \\
    &\leq \frac{1}{(1+2h(\lambda_1-c))^N}\big(\|D_{0}\|^2+h\big\|g\big(\hat{X}_{0}\big)-g\big(\hat{Y}_{0}\big)\big\|^2_{\rm HS} \big)\\
    &\leq \frac{1+c}{(1+2h(\lambda_1-c))^N}|x-y|^2,
\end{align*}
where we make use of Assumption \ref{as:fg} and $h\leq 1$ to deduce the last line. Since the fact that $a^N < e^{-(1-a)N}$ for any $a \in (0,1)$ and $\lambda_1>c$ in Assumption \ref{as:f_constant}, the assertion follows. 
\end{proof}

\subsection{The existence and uniqueness of the numerical invariant measure}
Now, we are ready to give the proof of Theorem \ref{thm:main2}.
\begin{proof}[The proof of Theorem \ref{thm:main2}]
Due to the Chebyshev inequality, for any initial value $x \in \mathbb{R}^d$ we obtain that $\left\{ \delta_x \hat{\mathbb{P}}_{jh} \right\}$ is tight, where $\delta_x$ is used to emphasize the initial value $x$. Then, a subsequence that converges weakly to an invariant measure $\hat{\pi} \in \mathcal{P} (\mathbb{R}^d)$ can be extracted. By the H\"older inequality and Lemma \ref{lem:sta1}, we can see that for any $r \in (0,1]$
\begin{align}\label{eq:Wx-y}
\begin{split}
    \mathbb{W}_r\left(\delta_x  \hat{\mathbb{P}}_{jh}, \delta_y \hat{\mathbb{P}}_{jh}\right) \leq \|\hat{X}_{jh}-\hat{Y}_{jh}\|_r^r \leq \|\hat{X}_{jh}-\hat{Y}_{jh}\|^r  \leq (1+c)^{r/2} |x-y|^r e^{-r\xi_1 j h}.
\end{split}
\end{align}
Then, thanks to Lemma \ref{lem:boundedness} and the Kolmogorov-Chapman equation, for any $j,l>0$ and $r\in (0,2]$ we have
\begin{align*}
\begin{split}
    \mathbb{W}_r\left(\delta_x  \hat{\mathbb{P}}_{jh}, \delta_x \hat{\mathbb{P}}_{(j+l)h} \right) &= \mathbb{W}_r\left(\delta_x \hat{\mathbb{P}}_{jh}, \delta_x \hat{\mathbb{P}}_{jh} \hat{\mathbb{P}}_{lh} \right)\\
    &\leq \int_{\mathbb{R}^d} \mathbb{W}_r\left(\delta_x  \hat{\mathbb{P}}_{jh}, \delta_y \hat{\mathbb{P}}_{jh}\right) \hat{\mathbb{P}}_{lh} (x,\mathrm{d} y)\\
    &\leq \int_{\mathbb{R}^d} (1+c)^{r/2} |x-y|^r e^{-r\xi_1 j h}\hat{\mathbb{P}}_{lh} (x,\mathrm{d} y) \\
    &\leq 2(1+c)^{r/2}\left(|x|^r + \|\hat{X}_{lh}\|^r\right) e^{-r\xi_1 j h} \\
    &\leq  K_2(r) e^{-r\xi_1 j h},
\end{split}
\end{align*}
where 
\begin{equation*}
K_2(r) := 2(1+c)^{r/2}\left(|x|^r + \left( |x|^2 + \|g(x)\|^2_{\rm HS} +  \frac{c_1}{\lambda_1-c_2}\right)^{r/2}\right).
\end{equation*}
Thus, letting $l \rightarrow \infty$ indicates
\begin{equation*}
 \mathbb{W}_r\left(\delta_x  \hat{\mathbb{P}}_{jh}, \hat{\pi} \right) \leq   K_2(r) e^{-r\xi_1 j h}.
\end{equation*}
Moreover, we have
\begin{equation*}
    \mathbb{W}_r\left(\delta_x  \hat{\mathbb{P}}_{jh}, \hat{\pi} \right) \rightarrow 0, \quad \text{as}~j \rightarrow \infty,
\end{equation*}
which guarantees that $\hat{\pi}$ is the unique invariant measure of $\left\{ \delta_x \hat{\mathbb{P}}_{jh} \right\}$. Now, assume that $\hat{\pi}_1 \in \mathcal{P} (\mathbb{R}^d)$ is the invariant measure of $\hat{X}_{jh}$ with the initial value $x$ and $\hat{\pi}_2 \in \mathcal{P} (\mathbb{R}^d)$ is the invariant measure of $\hat{X}_{jh}$ with the initial value $y$, we can see 
\begin{equation*}
\mathbb{W}_r \left(\hat{\pi}_1, \hat{\pi}_2 \right) \leq \int_{\mathbb{R}^d \times \mathbb{R}^d} \mathbb{W}_r \left(\delta_x \hat{\mathbb{P}}_{jh},  \delta_y \hat{\mathbb{P}}_{jh} \right) \nu (\mathrm{d} x, \mathrm{d} y) ,
\end{equation*}
for any $x,y \in \mathbb{R}^d$ with $x \neq y$. 
Therefore, by \eqref{eq:Wx-y} the BEM method has a unique invariant measure.
\end{proof}


\subsection{The uniform strong convergence of the BEM method}
The proof of Theorem \ref{thm:error} is presented as follows.
\begin{proof}[The proof of Theorem \ref{thm:error}]
First note that
\begin{align}
\begin{split}
        X_{Nh}&=X_{(N-1)h}-\int_{(N-1)h}^{Nh}AX_{s}\mathrm{d}s+\int_{(N-1)h}^{Nh}f(X_{s})\mathrm{d}s+\int_{(N-1)h}^{Nh}g(X_{s})\mathrm{d}W_s\\
    &=X_{(N-1)h}-\int_{(N-1)h}^{Nh}A\big(X_{s}-X_{Nh}\big)\mathrm{d}s-hAX_{Nh}\\
    &\quad +\int_{(N-1)h}^{Nh}(f(X_{s})-f(X_{Nh}))\mathrm{d}s+hf(X_{Nh})\\
    &\quad +\int_{(N-1)h}^{Nh}\Big(g(X_{s})-g(X_{(N-1)h}))\mathrm{d}W_s+g\big(X_{(N-1)h}\big)\Delta W_{(N-1)h}.
\end{split}
\end{align}
Define $e_N:=X_{Nh}-\hat{X}_{Nh}$. Then 
\begin{align*}
   &2\mathbb{E} \langle e_N-e_{N-1} ,e_N\rangle\\
   &=-2h\mathbb{E}\langle Ae_N,e_N\rangle+2h\mathbb{E}\langle f(X_{Nh})-f(\hat{X}_{Nh}),e_N
   \rangle\\
   &\quad +2\mathbb{E}\Big\langle-\int_{(N-1)h}^{Nh}A(X_{s}-X_{Nh})\mathrm{d}s ,e_N\Big\rangle\\
   &\quad+2\mathbb{E}\Big\langle\int_{(N-1)h}^{Nh}(f(X_{s})-f(X_{Nh}))\mathrm{d}s ,e_N\Big\rangle\\
   &\quad+2\mathbb{E}\Big\langle\int_{(N-1)h}^{Nh}(g(X_s)-g\big(X_{(N-1)h}))\mathrm{d}W_s ,e_N\Big\rangle\\
   &\quad+2\mathbb{E}\langle (g(X_{(N-1)h})-g(\hat{X}_{(N-1)h}))\Delta W_{(N-1)h},e_N
   \rangle.
  \end{align*}
Note that for $t\in [(N-1)h,Nh]$, $\int_{(N-1)h}^{t}e_{N-1}^T(g(X_s)-g\big(X_{(N-1)h}))\mathrm{d}W_s$ gives a martingale, where $a^T$ represents the transpose of a vector or matrix $a$. To see it, define the stopping time $\tau_{N,K}:=\inf\{s: |X_s|>K+|X_{(N-1)h}|\}$. Note that $\{\tau_{N,K}\}_{K\in \mathbb{N}}$ is nondescreasing and $\lim_{K\to \infty}\tau_{N,K}=\infty$. Then one can check that $\int_{(N-1)h}^{t\wedge  \tau_{N,K}}e_{N-1}^{T}(g(X_s)-g\big(X_{(N-1)h}))\mathrm{d}W_s$ is indeed a martingale. Then we have $$\mathbb{E}\Big\langle\int_{(N-1)h}^{Nh}(g(X_s)-g\big(X_{(N-1)h}))\mathrm{d}W_s ,e_N\Big\rangle=\mathbb{E}\Big\langle\int_{(N-1)h}^{Nh}(g(X_s)-g\big(X_{(N-1)h}))\mathrm{d}W_s ,e_N-e_{N-1}\Big\rangle.$$

By Young's inequality 
$$2ab\leq \epsilon^2a^2+\frac{b^2}{\epsilon^2}, \quad \forall a,b>0,$$
and Assumption \ref{as:fg}, we are able to choose $\epsilon_0^2:=h(\lambda_1-c)/2$ such that
\begin{align*}
&2\mathbb{E} \langle e_N-e_{N-1} ,e_N\rangle\\
     &\leq 2h\mathbb{E}\langle(-A+cI)e_N,e_N\rangle-hl_1\|g(X_{Nh})-g(\hat{X}_{Nh})\|^2\\
   &\quad +2\epsilon_0^2 \|e_N\|^2+\frac{1}{\epsilon^2_0}\Big\|-\int_{(N-1)h}^{Nh}A(X_{s}-X_{Nh})\mathrm{d}s\Big\|^2\\
   &\quad +\frac{1}{\epsilon^2_0}\Big\|\int_{(N-1)h}^{Nh}(f(X_{s})-f(X_{Nh}))\mathrm{d}s\Big\|^2\\
   &\quad +2\Big\|\int_{(N-1)h}^{Nh}(g(X_{s})-g(X_{(N-1)h}))\mathrm{d}W_s\Big\|^2\\
   &\quad +2h\|g(X_{(N-1)h})-g(\hat{X}_{(N-1)h})\|^2+\|e_N-e_{N-1}\|^2.
\end{align*}
By Proposition \ref{prop:use_bound}, we know there exists a constant $C$ depending on $q$, $A$, $f$ and $g$ such that
\begin{align*}
    &\Big\|-\int_{(N-1)h}^{Nh}A\big(X_{s}-X_{-k\tau+Nh}\big)\mathrm{d}s\Big\|^2\\
   &+\Big\|\int_{(N-1)h}^{Nh}\Big(f(X_{s})-f(X_{Nh})\Big)\mathrm{d}s\Big\|^2\\
   &\leq Ch^3\Big(1+\sup_{s\geq 0} \|X_{s}\|_{4q-2}^{2q-1}\Big):=\beta h^3.
\end{align*}
Besides, by the It\^o isometry and the H\"older continuity of $X$ in temporal variable as shown in Proposition \ref{prop:use_bound},
\begin{align*}
    &2\Big\|\int_{(N-1)h}^{Nh}(g(X_{s})-g(X_{(N-1)h}))\mathrm{d}W_s\Big\|^2\leq \frac{2c}{l_1}h^2:=\hat{c}h^2.
\end{align*}
Note that $\beta$ is bounded because of Proposition \ref{prop:use_bound}. Define $G_N:=\|g(X_{Nh})-g(\hat{X}_{Nh})\|^2$.
Then from \eqref{eqn:eqa} and the estimate above we have that
\begin{align*}
     & \|e_N\|^2-\|e_{N-1}\|^2+hl_1 \|G_N\|^2\leq 2\mathbb{E} \langle e_N-e_{N-1} ,e_N\rangle\\
   &\leq 2h\mathbb{E}\langle(-A+cI)e_N,e_N\rangle+2\epsilon_0^2 \|e_N\|^2+\frac{\beta h^3}{\epsilon^2_0}+\hat{c}h^2+2h\|G_{N-1}\|^2.
\end{align*}
Define $\hat{\alpha}:=\frac{2\beta +c(\lambda_1-c)}{(\lambda_1-c)^2}h$. Since that $1+h(\lambda_1-c)\leq l_1/2$, then the inequality above can be rearranged to
\begin{align*}
    \Big(1+h(\lambda_1-c)\Big)\big(\|e_N\|^2+2h\|G_N\|^2-\hat{\alpha}\big)\leq \|e_{N-1}\|^2+2h\|G_{N-1}\|^2-\hat{\alpha}.
\end{align*}

By iteration we have 
$$\|e_N\|^2+2h\|G_N\|^2\leq \Big(1-\frac{1}{1+h(\lambda_1-c)^N}\Big)\frac{2\beta +c(\lambda_1-c)}{(\lambda_1-c)^2}h,$$
 because $X_0=\hat{X}_0$.
Finally due to Assumption \ref{as:f_constant}, we have $\|e_N\|^2\leq \frac{2\beta +c(\lambda_1-c)}{(\lambda_1-c)^2}h.$ Then the assertion follows.
\end{proof}

\subsection{Convergence of the numerical invariant measure to the underlying counterpart}
Now we are ready to show the last main theorem.
\begin{proof}[The proof of Theorem \ref{thm:numinvconv}]
By the triangle inequality, we have
\begin{equation*}
    \mathbb{W}_r\left(\hat{\pi},\pi  \right) \leq \mathbb{W}_r\left(\hat{\pi},\delta_x \hat{\mathbb{P}}_{jh}  \right) + \mathbb{W}_r\left(\delta_x \mathbb{P}_{jh},\pi  \right) +  \mathbb{W}_r\left(\delta_x \mathbb{P}_{jh},\delta_x \hat{\mathbb{P}}_{jh}  \right).
\end{equation*}
Thanks to Theorems \ref{thm:main2} and \ref{thm:trueinvmea}, the convergences of $\hat{\mathbb{P}}_{jh}$ to $\hat{\pi}$  and $\mathbb{P}_{jh}$ to $\pi$ yield
\begin{equation*}
    \mathbb{W}_r\left(\hat{\pi},\delta_x \hat{\mathbb{P}}_{jh}  \right) \leq K_2(r) e^{-r\xi_1 j h} \quad \text{and} \quad    \mathbb{W}_r\left(\delta_x \mathbb{P}_{jh},\pi  \right) \leq C e^{-\xi_2 jh},
\end{equation*}
where $C$ is a genetic constant in this proof that may be different from line to line. Now, for any fixed $h \in (0,h^{*}\wedge h^{**})$, there is a sufficient large $j^{*}$ such that for any $j > j^{*}$
\begin{equation*}
    \mathbb{W}_r\left(\hat{\pi},\delta_x \hat{\mathbb{P}}_{jh}  \right) \leq C h^{r/2} \quad \text{and} \quad    \mathbb{W}_r\left(\delta_x \mathbb{P}_{jh},\pi  \right) \leq C h^{r/2}.
\end{equation*}
Then for the fixed $j>j^{*}$, we derive from Theorem \ref{thm:error} that
\begin{equation*}
    \mathbb{W}_r\left(\delta_x \mathbb{P}_{jh},\delta_x \hat{\mathbb{P}}_{jh}  \right) \leq C h^{r/2}.
\end{equation*}
Therefore, the assertion is proved.
\end{proof}

\section{Numerical examples}
In this section, two numerical examples are presented. Example \ref{example:1d} is used to illustrate that the BEM method admits a unique invariant measure, which then converges to the underlying one. In Example \ref{example:2d}, we discuss the application of our numerical method in the stabilisation of SDEs in the distribution sense.  
\begin{example}\label{example:1d}
Consider a scalar mean-reverting type model with super-linear coefficients 
\begin{equation*}
    dX_t = \left( b - \alpha X_t - \beta X_t^3\right) \mathrm{d}{t} + \sigma X_t^2 \mathrm{d}{W_t},\quad X_0 =x.
\end{equation*}
\end{example}
By setting $b=1$, $\alpha=1$, $\beta=2$ and $\sigma=1$, it is not hard to see that all the assumptions are satisfied. Therefore, according to our theorems there exists a unique invariant measure for the BEM method. One thousand sample paths are simulated with $X_0=5$ and $h=0.01$, which are then used to construct empirical density functions at different time points. It is clear to see from the left plot in  Figure \ref{fig1} that the shapes of empirical density functions at $t=0.1$, $t=0.3$ and $t=0.5$ are quite different but the ones at $t=4$ and $t=10$ are much more similar, which indicates the existence of the invariant measure. From the right plot in Figure \ref{fig1}, we can see the empirical density functions at the same time point $t=90$ but with different initial values $-5,5,15$ are quite close to each other, which indicates uniqueness of the invariant measure.
\begin{figure}
\includegraphics[width=0.475\textwidth]{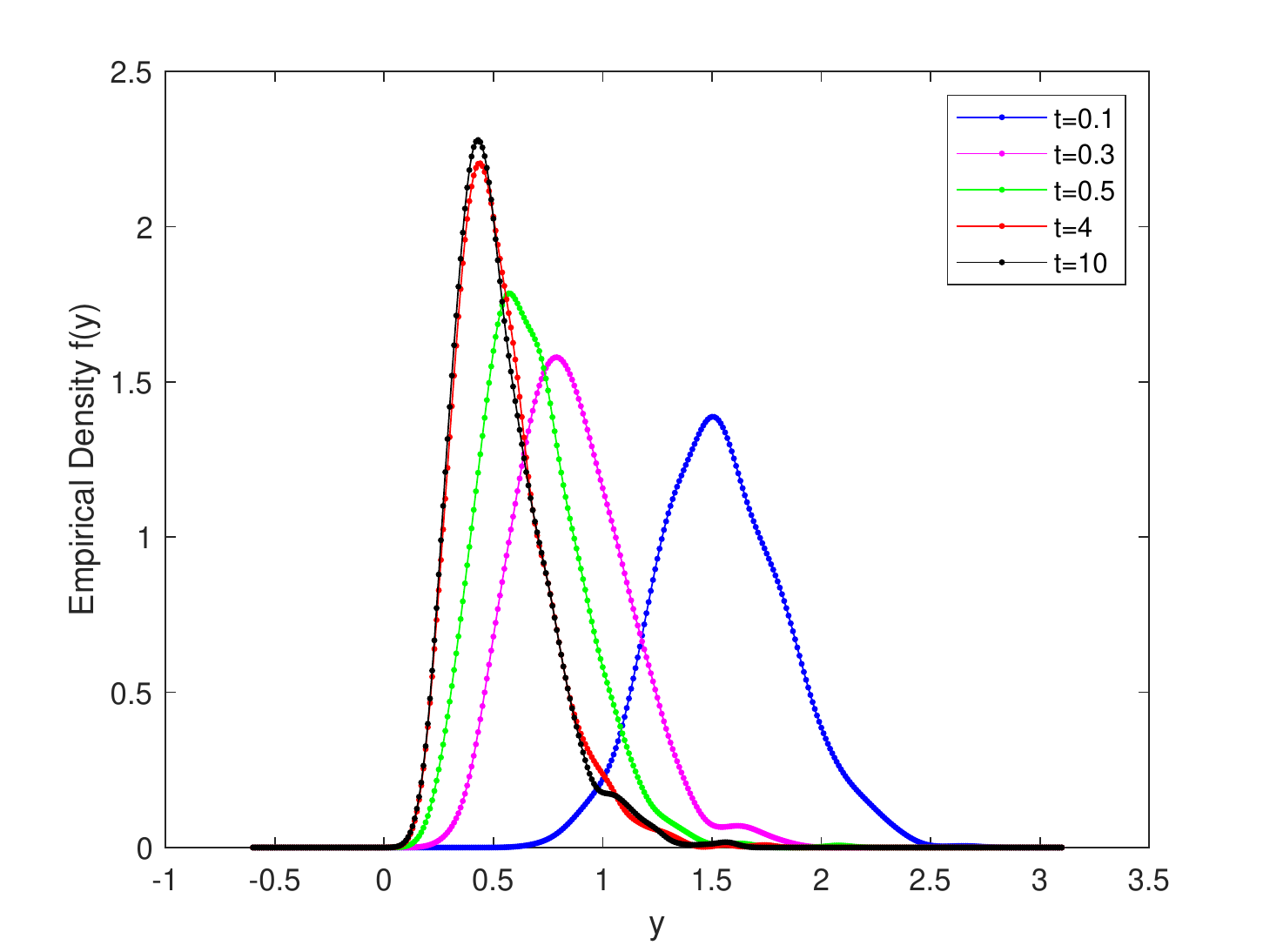}
\hspace{\fill}
\includegraphics[width=0.475\textwidth]{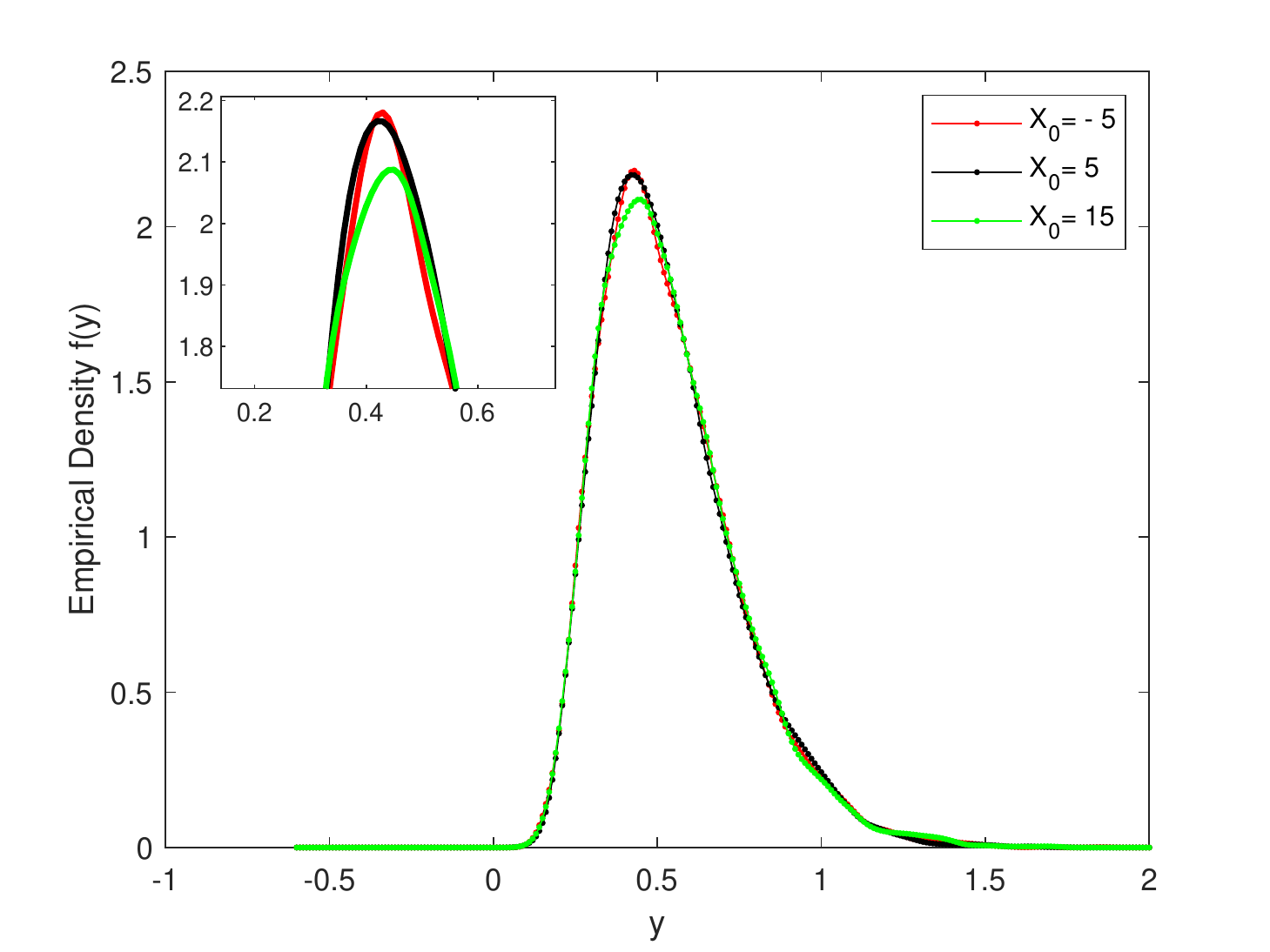}
\caption{Left: Empirical density functions at different time points. Right: Empirical density functions at t=90 with different initial values }
\label{fig1}       
\end{figure}
To measure the difference between empirical density functions at consecutive time points $t=ih$ and $t=(i+1)h$ for $i=0,1,...$, the Kolmogorov–Smirnov (K-S) test is employed to test a sequence of hypotheses that
$$H_0: \quad \text{Two samples at t=ih and t=(i+1)h are from the same distribution},$$  
$$H_1: \quad \text{Two samples at t=ih and t=(i+1)h are from different distributions},$$
for $i=0,1,...200$. It can be observed from the upper plot in Figure \ref{fig2} that as time gets large the differences between empirical density functions at consecutive time points vanish, which indicates the existence of the invariant measure for the numerical solution. The lower plot in Figure \ref{fig2} also confirms this conclusion as the p values are quite close to 1 as time advances.
\begin{figure}
\centering
  \includegraphics[width=0.8\textwidth]{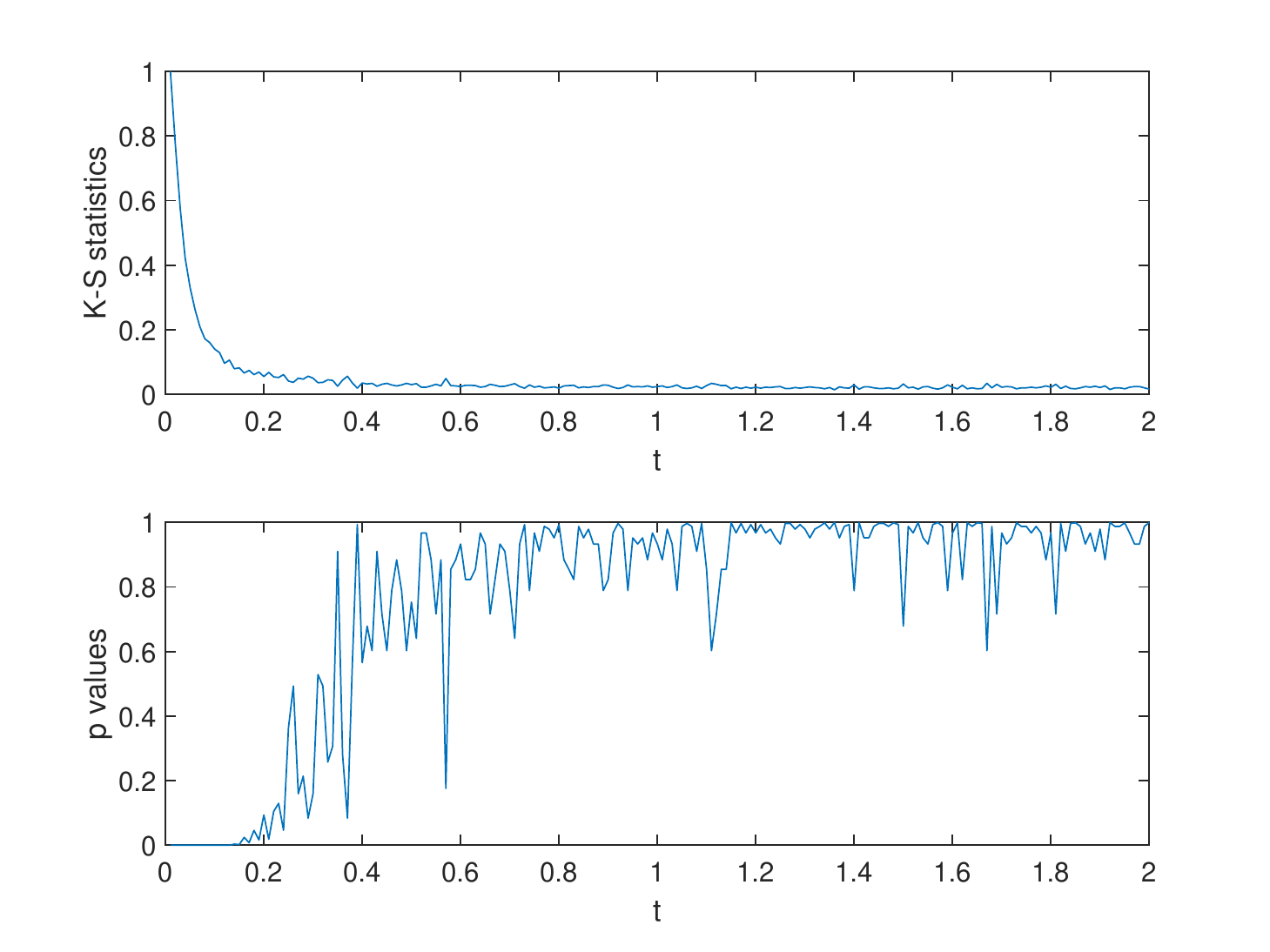}
\caption{K-S tests for samples at consecutive time points for Example \ref{example:1d}}
\label{fig2}       
\end{figure}

Now we turn to our second example, which could be regarded as an illustration of the application of our results in the system control problem. To make it clear, we brief the problem as follows.

In the very recent works \cite{LLLM2022,YHLM2022}, the authors discussed the design of some controllers to stabilise some SDEs that originally are not stable in distribution. To be more precise, for some unstable SDE (i.e. not stable in the distribution sense)
\begin{align*}
  \begin{cases}
    \mathrm{d}{X_t} = 
    f(X_t)  \mathrm{d}{t}+g(X_t)\mathrm{d}{W_t},& \quad \text{for } t >0,\\
    X_{0} = x\in \mathbb{R}^d,& 
  \end{cases}
\end{align*}
the authors in those two works used some past state $X_{t-\tau}$, where the small enough constant $\tau>0$ represents the time delay, to design a controller $AX_{t-\tau}$ such that the controlled system
\begin{align}
  \label{eq:ctlSDE}
  \begin{cases}
    \mathrm{d}{X_t} = 
    \big[f(X_t) - A X_{t-\tau}\big] \mathrm{d}{t}+g(X_t)\mathrm{d}{W_t},& \quad \text{for } t >0,\\
    X_{0} = x\in \mathbb{R}^d& 
  \end{cases}
\end{align}
is stable in distribution. In their works, the authors proposed the method to design the controller and proved theoretically that the controlled system is indeed stable in distribution. But in practice, numerical methods are always required for the applications of those theorems, as the explicit forms of the true solutions of stochastic systems can hardly be found, not to mention the explicit forms of the invariant distributions. Therefore, trusted numerical methods are essential for demonstrating those theorems in \cite{LLLM2022,YHLM2022} and displaying the shapes of the invariant distributions. By saying trusted numerical methods, we mean those methods that have been proved to be able to approximate the underlying true invariant distributions. And this is what we proved in this paper for the BEM method. 

It is clear that if the $AX_{t-\tau}$ is replaced by $AX_{t}$ in the controlled system \eqref{eq:ctlSDE}, then it looks exactly like the SDE \eqref{eq:SPDE} studied in this paper. Since our results obtained in this paper do not include delay terms in the equations, we use $AX_{t}$ as the controller in our Example \ref{example:2d}. In future, We are going to work out the numerical invariant measures for some stochastic delay differential equations.
\begin{example}\label{example:2d}
Consider a two dimensional SDE  
\begin{align*}
  \begin{cases}
    \mathrm{d}{X_{t,1}} = 
    \big[10 + 2 X_{t,1} - X_{t,2} \big] \mathrm{d}{t}+\big[0.5 + 0.1X_{t,2}  \big]\mathrm{d}{W_{t,1}}, &\\
    \mathrm{d}{X_{t,2}} = 
    \big[5 + X_{t,1} + 3 X_{t,2} - X_{t,2}^3 \big] \mathrm{d}{t}+\big[0.3 + 0.1\big( X_{t,1} + X_{t,2}^2\big)  \big]\mathrm{d}{W_{t,2}},& \\
    X_{0} = \big(5,5\big)
  \end{cases}
\end{align*}
which is unstable in distribution for any initial data. According to theorems in \cite{LLLM2022,YHLM2022}, one can design a controller
\[ A = \begin{pmatrix}
-5 & 0 \\
-2 & -4
\end{pmatrix} \]
such that the controlled system
\begin{align}
  \label{eq:2dexample-stable}
  \begin{cases}
    \mathrm{d}{X_{t,1}} = 
    \big[10 - 3 X_{t,1} - X_{t,2} \big] \mathrm{d}{t}+\big[0.5 + 0.1X_{t,2}  \big]\mathrm{d}{W_{t,1}} &\\
    \mathrm{d}{X_{t,2}} = 
    \big[5 - X_{t,1} - 3 X_{t,2} - X_{t,2}^3 \big] \mathrm{d}{t}+\big[0.3 + 0.1\big( X_{t,1} + X_{t,2}^2\big)  \big]\mathrm{d}{W_{t,2}}& 
  \end{cases}
\end{align}
is stable in the distribution sense. But, in practice one may further ask the question: what does the unique distribution look like?
\end{example}
To answer the question, one may turn to our results in this paper. Since it is not hard to check that coefficients of \eqref{eq:2dexample-stable} satisfy the requirements, we can regard the numerical invariant distribution generated by the BEM method as a trusted approximate to the underlying one. 1000 sample paths generated by the BEM method with the step size of 0.05 are simulated. Similar to Example \ref{example:1d}, the K-S test is applied to illustrate that the distributions generated by the BEM method indeed tends to a unique one as the time advances. The asymptotic behaviour of the K-S statistics in Figure \ref{fig:2dkstest} confirms it. More importantly, Figure \ref{fig:2dkstest} also indicates that one does not have to simulate sample paths for long time to see the invariant distribution, as the differences between empirical distributions decay to zero in a quite fast way.       
\begin{figure}
\centering
  \includegraphics[width=0.5\textwidth]{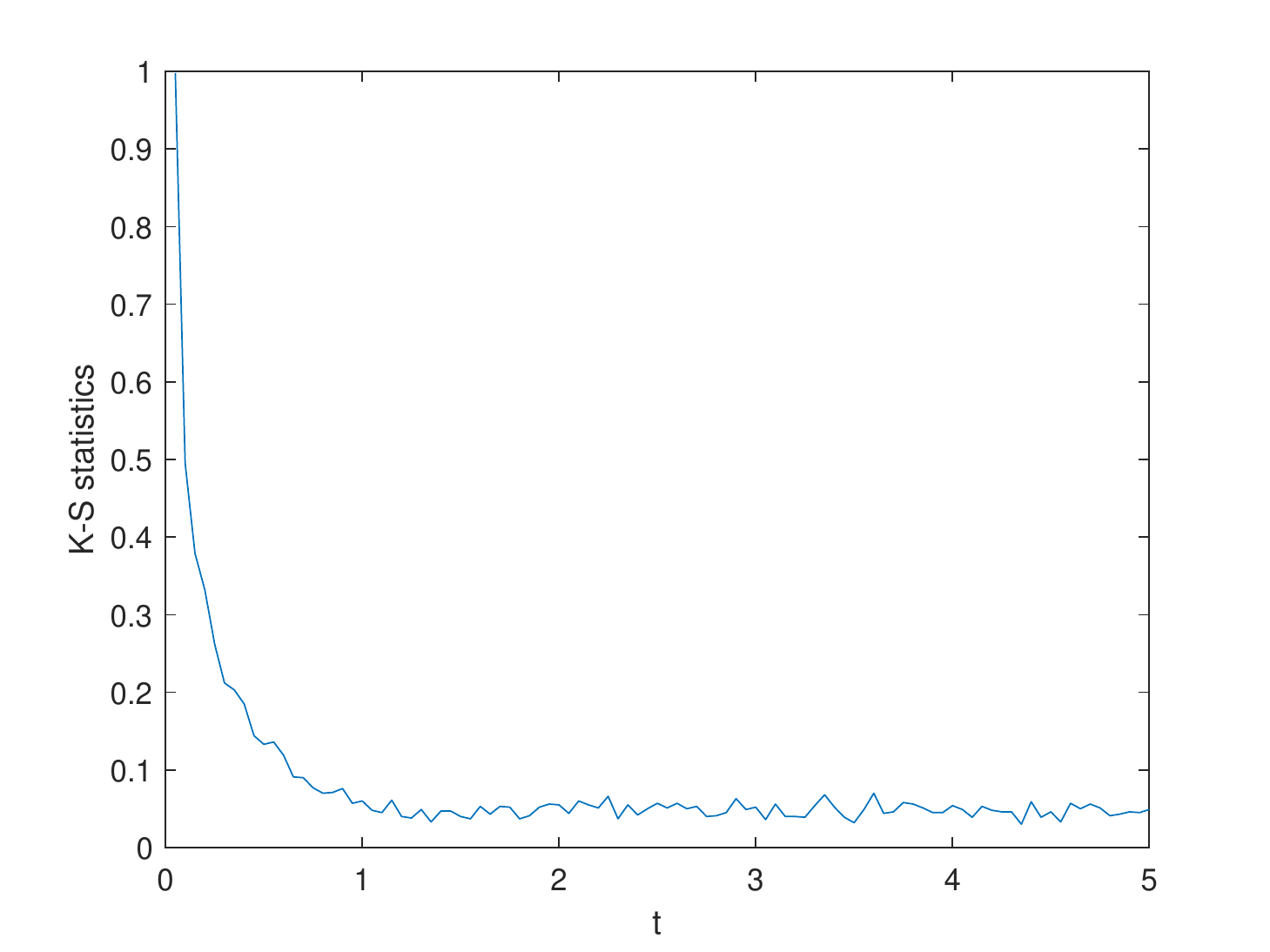}
\caption{K-S tests for samples at consecutive time points for Example \ref{example:2d}}
\label{fig:2dkstest}       
\end{figure}
Therefore, to see the shape of the unique distribution of \eqref{eq:2dexample-stable}, it is sufficient to use the empirical distribution of the numerical solutions generated by the BEM method at relatively small time point. Figure \ref{fig:2dexample3dplot} displays the empirical density function of the solution $(X_{t,1},X_{t,2})$ at $t=4$, which could be used to answer the question raised in Example \ref{example:2d}. In practice, one can further use some non-parametric and parametric approaches to find out what the distribution is and the estimated values of parameters of it.
\begin{figure}
\centering
\includegraphics[width=0.7\textwidth]{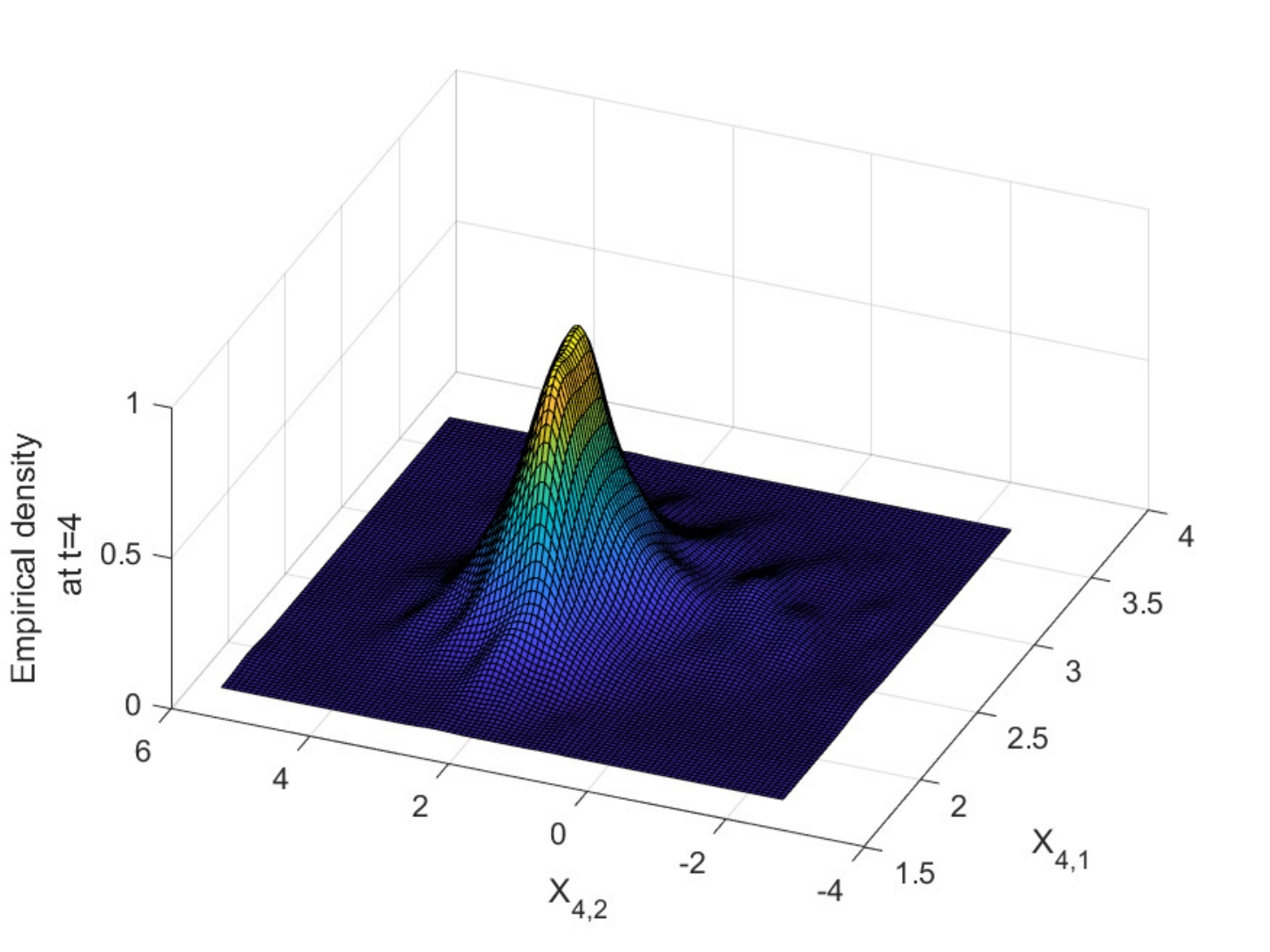}
\caption{Empirical density function at $t=4$}
\label{fig:2dexample3dplot}       
\end{figure}

 To end up this section, we give a short informal discussion on the potential application of our results in numerical approximates to stationary Fokker-Planck equations. It is well know that if there exists a unique invariant measure $\pi$ for the SDE
\begin{equation*}
    \mathrm{d}{X_t} = \mu(X_t)  \mathrm{d}{t}+\sigma(X_t)\mathrm{d}{W_t},
\end{equation*}
then the true $\pi$ can be found by solving the following partial differential equation (PDE)
\begin{equation}\label{eq:F-Ppde}
    \frac{1}{2} \frac{\partial^2}{\partial x^2} \big( \sigma^2 \pi \big) - \frac{\partial}{\partial x} \big( \mu \pi\big) = 0 \quad \text{with the condition}\quad  \int_{x \in \mathbb{R}^d} \pi(x) \mathrm{d}{x}=1.
\end{equation}
The numerical invariant measure $\hat{\pi}$ obtained in this paper can be regarded as a good estimator for the solution of \eqref{eq:F-Ppde}, as the convergence of $\hat{\pi}$ to $\pi$ actually has been proved in this paper. For example, Figure \ref{fig:2dexample3dplot} indeed display the solution to the stationary Fokker-Planck equation that is corresponding to the SDE \eqref{eq:2dexample-stable}. Fokker-Planck equations and their stationary forms are of importance on their own rights in various problems arising in chemical reactions, statistical physics, and fluid mechanics, however, their practical use is hindered by the curse of dimensionality. Based on the success of \cite{HJE2018}, it is expected that under some smart design of neural network architecture through the probabilistic representation and the numerical simulation, one may establish an effective stochastic framework for the PDE \eqref{eq:F-Ppde}, which could avoid the curse of dimensionality.

\section{Conclusion and future research}
In this paper, we revisited the classical BEM method and showed the existence and uniqueness of its invariant measure when both the drift and the diffusion coefficients are allowed to contain some super-linear terms. In addition, the convergence of the numerical invariant measure to its underlying counterpart was also proved. Numerical simulations were provided to demonstration our theorems and their potential applications in system controls. 

As we mentioned occasionally in this paper, there are many works that have not been done in this area. One definitely interesting work is to extend the results in this paper to stochastic delay differential equations, for which the concept of invariant measure is quite different from the case of SDEs. Another question that is worth to be considered is the numerical invariant measure of hybrid SDEs with super-linear drift and diffusion coefficients, in which the switches among different modes would play important roles in the stability in distribution of the whole system. 

\section*{Acknowledgements}
Wei Liu would like to thank Shanghai Rising-Star Program (Grant No. 22QA1406900), Science and Technology Innovation Plan of Shanghai (Grant No. 20JC1414200), and the National Natural Science Foundation of China (Grant No. 11871343, 11971316) for their financial support.

\bibliographystyle{plain}

\bibliography{bibfile}

\end{document}